\newtheorem{thm}{Theorem}[section]
\newtheorem{lem}[thm]{Lemma}
\newtheorem{cor}[thm]{Corollary}
\newcommand{\Z}{\mathbb{Z}}
\newcommand{\gl}{\mathop{\rm GL}\nolimits}
\newcommand{\rest}{\mathbin{\upharpoonright}}
\newcommand{\la}{\lambda}
\begin{document}

\title{The Spectrum of Group-Based Complete Latin Squares}

\author{M.~A.~Ollis\footnote{Corresponding author, email address: \texttt{matt@marlboro.edu.}} \  and Christopher R. Tripp   \\
    {\it Marlboro College, P.O.~Box A, Marlboro,} \\    
    {\it Vermont 05344, USA.} }  
 
\date{}

\maketitle

\begin{abstract}
We construct sequencings for many groups that are a semi-direct product of an odd-order abelian group and a cyclic group of odd prime order.  It follows from these constructions that there is a group-based complete Latin square of order~$n$ if and only if~$n \in \{ 1,2,4\}$ or there is a non-abelian group of order~$n$.

\vspace{3mm}
\noindent
{\bf Keywords:} complete Latin square; directed terrace; Keedwell's Conjecture; sequencing.  \\
{\bf MSC2010:} 05B15

\end{abstract}

\section{Introduction}\label{sec:intro}

A {\em Latin square} of order~$n$ is an $n \times n$ array of symbols from a set of size~$n$ with each symbol appearing once in each row and once in each column.  A Latin square is {\em row-complete} or {\em Roman} if each pair of distinct symbols appears in adjacent positions in a row once in each order.  It is {\em complete} if both it and its transpose are row-complete.

Interest in complete and row-complete Latin squares was originally prompted by their usefulness in the design of experiments where neighboring treatments, whether in space or time, might interact.  See, for example,~\cite{BJ08}.

The Cayley table of a finite group of order~$n$ is a Latin square.  The principal question for this work is to determine at what orders there is a group for which it is possible to permute the rows and columns of its Cayley table to give a complete Latin square.  To this end, consider the following definition.  Let~$G$ be a group of order~$n$ and ${\bf a} = (a_1, a_2, \ldots, a_n)$ be an arrangement of the elements of~$G$.  Define~${\bf b} = (b_1, b_2, \ldots, b_{n-1})$ by~$b_i = a_i^{-1}a_{i+1}$ for each~$i$.  If~${\bf b}$ includes each non-identity element of~$G$ exactly once then~${\bf b}$ is a {\em sequencing} of~$G$ and~${\bf a}$ is a {\em directed terrace} for $G$.  Call a group that admits a sequencing {\em sequenceable}.

\begin{thm}\label{th:gbcls}{\rm \cite{Gordon61}}
The rows and columns of the Cayley table of a group~$G$ of order~$n$ may be permuted to give a complete Latin square if and only if~$G$ is sequenceable.
\end{thm}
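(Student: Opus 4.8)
The plan is to prove the two directions separately, in each case translating the conditions ``row-complete'' and ``column-complete'' into statements about quotients of consecutive labels. Throughout, a permuted Cayley table is specified by an arrangement $\rho=(\rho_1,\dots,\rho_n)$ of~$G$ for the rows and an arrangement $\gamma=(\gamma_1,\dots,\gamma_n)$ for the columns, with the entry in cell $(i,j)$ equal to the product $\rho_i\gamma_j$. The guiding observation is that two horizontally adjacent entries $\rho_i\gamma_j$ and $\rho_i\gamma_{j+1}$ have left quotient $(\rho_i\gamma_j)^{-1}(\rho_i\gamma_{j+1})=\gamma_j^{-1}\gamma_{j+1}$, depending only on~$j$, while two vertically adjacent entries $\rho_i\gamma_j$ and $\rho_{i+1}\gamma_j$ have right quotient $(\rho_i\gamma_j)(\rho_{i+1}\gamma_j)^{-1}=\rho_i\rho_{i+1}^{-1}$, depending only on~$i$.

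For the forward direction, given a directed terrace $\a$ for~$G$ with sequencing~$\b$, I would set $\gamma=\a$ and $\rho=(a_1^{-1},\dots,a_n^{-1})$. A short calculation then gives $\gamma_j^{-1}\gamma_{j+1}=b_j$ for the row quotients and $\rho_i\rho_{i+1}^{-1}=a_i^{-1}a_{i+1}=b_i$ for the column quotients. Because a target ordered pair of distinct symbols determines first the relevant quotient and hence, since $\b$ runs over every non-identity element exactly once, a unique index $j$ (respectively~$i$), and then a unique remaining index, each ordered pair of distinct symbols occurs exactly once among horizontally adjacent cells and exactly once among vertically adjacent cells. Hence the square is complete.

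For the converse, suppose some $\rho,\gamma$ yield a complete, hence in particular row-complete, square. I would argue by counting: for fixed~$j$ the $n$ horizontally adjacent pairs $(\rho_i\gamma_j,\rho_i\gamma_{j+1})$ obtained as $i$ varies form exactly the set $\set{(x,xq_j)}{x\in G}$ with $q_j=\gamma_j^{-1}\gamma_{j+1}$, i.e.\ all ordered pairs with left quotient~$q_j$. Distinctness of entries within a row forces $q_j\neq e$, and row-completeness forces the $n-1$ quotients $q_1,\dots,q_{n-1}$ to be pairwise distinct, since equal quotients would repeat a whole block of pairs. Thus $(q_1,\dots,q_{n-1})$ lists every non-identity element exactly once, so $\gamma$ is a directed terrace and $G$ is sequenceable.

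The main obstacle I anticipate is the forward construction in the non-abelian case. The symmetric choice $\rho=\gamma=\a$ makes the rows complete but leaves the column quotients equal to $a_ia_{i+1}^{-1}=a_ib_i^{-1}a_i^{-1}$, a set of conjugates of the inverted sequencing elements that need not exhaust $G\setminus\{e\}$; it succeeds only when~$G$ is abelian. The resolution is exactly the asymmetric choice above: replacing the row labels by their inverses turns the column quotients into the sequencing elements $b_i$ themselves. Identifying this fix is the crux; once it is in place, both completeness checks reduce to the same quotient-and-counting bookkeeping.
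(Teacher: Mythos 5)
Your proof is correct and takes essentially the same approach as the paper: your asymmetric choice $\rho=(a_1^{-1},\dots,a_n^{-1})$, $\gamma=(a_1,\dots,a_n)$ is precisely the paper's observation that the table with $(i,j)$-entry $g_ih_j$ is complete if and only if $(g_1^{-1},\dots,g_n^{-1})$ and $(h_1,\dots,h_n)$ are both directed terraces. You merely fill in the quotient-and-counting details that the paper's ``Proof idea'' leaves implicit.
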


\begin{proof}[Proof idea]
Let $(g_1, \ldots, g_n)$ and $(h_1, \ldots, h_n)$ be arrangements of the elements of~$G$ and let the $(i,j)$-entry of a Cayley table of~$G$ be given by~$g_ih_j$.  Then it is a complete Latin square if and only if $(g_1^{-1}, g_2^{-1}, \ldots, g_n^{-1})$ and $(h_1, h_2, \ldots, h_{n})$ are both directed terraces for~$G$.
\end{proof}

Let~$\Z_{n} = \{0 ,1, \ldots, n-1 \}$ be the additively written cyclic group of order~$n$.  When~$n$ is even,
$$ (0, n-1, 1, n-2, 2, n-3, \ldots, n/2) $$
is a directed terrace for~$\Z_n$.  The first use of a ``zig-zag" construction of this type is Walecki's in 1892~\cite{Lucas92}; see~\cite{Alspach08} for more of its history.  The first use of it to control neighbor balance in Latin squares seems to be due to Williams in 1949~\cite{Williams49}. 

The systematic consideration of sequenceability for arbitrary groups was initiated by Gordon~\cite{Gordon61} where, as well as proving Theorem~\ref{th:gbcls}, it is shown that an abelian group is sequenceable if and only if it has exactly one involution.  Hence if~$n$ is even there is a group-based complete Latin square of order~$n$.  The result also implies that for odd orders we must turn our attention to non-abelian groups.  Several families of groups of odd order are known to be sequenceable, including: a group of order~$p^m$ for each odd prime~$p$ and each~$m \geq 3$~\cite{Wang02}; many groups of order~$pq$ for distinct primes~$p$ and~$q$~\cite{Keedwell81, Wang02}; and a group of each order~$3m$ where~$m$ is powerful (i.e.~for each prime~$p$ dividing~$m$, $p^2$ also divides~$m$)~\cite{Ollis14}.  

We construct sequencings for some semi-direct products $\Z_q \ltimes A$ where~$A$ is an abelian group of odd order and~$q$ is an odd prime, including all possible such groups when~$A$ is cyclic.  These constructions allow us to determine the full spectrum of orders at which a group-based complete Latin square exists:
\begin{thm}\label{th:spectrum}
There is a group-based complete Latin square of order~$n$ if and only if~$n=1$, $n$ is even, or there exists a non-abelian group of order~$n$.  That is, if and only if~$n=1$, $n$ is even, or~$n$ has either a prime divisor~$p$ with~$p^3 | n$ or a prime-power divisor~$p^k$ such that~$p^k \equiv 1 \pmod{q}$ for some prime divisor~$q$ of~$n$. 
\end{thm}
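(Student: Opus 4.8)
The plan is to prove Theorem~\ref{th:spectrum} by establishing a chain of equivalences, combined with the new sequencing constructions the paper promises. First I would reduce the problem using Theorem~\ref{th:gbcls}: a group-based complete Latin square of order~$n$ exists if and only if there is a \emph{sequenceable} group of order~$n$. So the entire theorem becomes a statement about which orders admit a sequenceable group. The two ``if and only if'' formulations in the statement must also be reconciled: the second is just a number-theoretic restatement of ``there exists a non-abelian group of order~$n$,'' so I would first record the elementary group-theoretic fact that a non-abelian group of order~$n$ exists precisely when some prime~$p$ has $p^3 \mid n$ (giving a non-abelian $p$-group factor) or some prime power~$p^k \parallel n$ satisfies $p^k \equiv 1 \pmod q$ for a prime $q \mid n$ (giving a non-trivial semidirect product). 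This is a standard consequence of the classification of groups of order with small prime-power factorization, so I would cite or sketch it rather than grind through it.

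Next I would handle the \emph{necessity} direction. Suppose $n \neq 1$ is odd and there is \emph{no} non-abelian group of order~$n$; I must show no group of order~$n$ is sequenceable. Since every group of order~$n$ is then abelian, Gordon's criterion (quoted in the excerpt: an abelian group is sequenceable iff it has exactly one involution) applies. But an abelian group of odd order has \emph{no} involutions, hence is not sequenceable. This disposes of necessity cleanly: for odd~$n$ with only abelian groups available, sequenceability fails. The even case and $n=1$ require the complementary observation that $\Z_n$ itself works (the zig-zag directed terrace for even~$n$, and the trivial terrace for $n=1$), which is already supplied in the excerpt.

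The \emph{sufficiency} direction is where the real content lies, and I expect it to be the main obstacle. I must show that whenever~$n$ is odd and admits a non-abelian group, some group of that order is in fact sequenceable. The strategy is to exhibit, for each such~$n$, a concrete non-abelian group that the paper's constructions prove sequenceable. Here I would split into the two number-theoretic cases. In the case $p^3 \mid n$, I would invoke the known result (cited as~\cite{Wang02}) that there is a sequenceable group of order~$p^m$ for every odd prime~$p$ and $m \geq 3$, and then combine it across the other prime factors of~$n$. In the case $p^k \equiv 1 \pmod q$, the relevant group is a semidirect product $\Z_q \ltimes A$ with~$A$ abelian of order a multiple of~$p^k$, and sequenceability of these is exactly what the paper's new constructions are designed to deliver.

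The genuine difficulty, and the step I would flag as the crux, is the \textbf{combination across prime factors}: sequenceability is not obviously preserved under direct products, so having a sequenceable group for the ``essential'' non-abelian prime-power part does not immediately give a sequenceable group of the full order~$n$. I would therefore expect the proof to rely on a product-type lemma—very likely that a direct product of a sequenceable group with a cyclic group of coprime (odd) order remains sequenceable, or more generally that one may build a directed terrace for $H \times \Z_m$ from one for~$H$. Establishing or citing such a closure property, and checking that the order constraints (oddness, coprimality, exactly-one-involution conditions) are met so that the relevant factors stay within the scope of the new constructions and of Gordon's abelian criterion, is where the careful work sits. Once that closure lemma is in hand, the sufficiency direction reduces to: isolate one non-abelian prime-power or semidirect-product factor guaranteed by the number-theoretic condition, make it sequenceable by the cited or newly-constructed results, and absorb the remaining (necessarily admissible) factors via the product lemma.
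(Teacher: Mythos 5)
Your reduction via Theorem~\ref{th:gbcls}, your necessity argument (odd-order abelian groups have no involution, so Gordon's criterion rules them out), and your appeal to the number-theoretic characterisation of orders admitting a non-abelian group (Lemma~\ref{lem:nonabnumbers}) all match the paper. The genuine gap is in sufficiency, at precisely the step you flag as the crux: the ``product-type lemma'' you hope to invoke --- that a direct product of a sequenceable group with a cyclic group of coprime odd order is again sequenceable --- does not appear in the paper and is not known in general. Closure of sequenceability under direct products is essentially open, and for odd orders the cyclic factor is abelian with no involution, hence itself non-sequenceable, so there is no known mechanism for ``absorbing the remaining factors'' at the level of sequencings. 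Your treatment of the case $p^3 \mid n$ (cite Wang's sequenceable groups of order $p^m$, then combine across the other prime factors) founders on the same missing lemma.

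The paper avoids taking products of sequenceable groups altogether. For each odd~$n$ admitting a non-abelian group it constructs a single group of order exactly~$n$, namely $\Z_q \ltimes_{\alpha} A$ with~$A$ abelian of order~$n/q$: the abelian complement~$A$ soaks up \emph{all} the remaining prime factors at once. For cases~1 and~2 of Lemma~\ref{lem:nonabnumbers} one takes $A = \Z_{n/q}$ cyclic (Theorem~\ref{th:cyclic}; note that when $q^3 \mid n$ we get $q^2 \mid n/q$, so $\Z_{n/q}$ has the required automorphism of order~$q$ and no appeal to Wang's $p^m$ result is needed); for case~3 one takes $A = \Z_p^2 \times B$, possibly with a $\Z_3$ or $\Z_9$ factor (Theorems~\ref{th:non3} and~\ref{th:3}). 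The combination across prime factors then happens not at the level of sequencings but at the level of the auxiliary objects fed into Theorem~\ref{th:constr}: directed R$^*$-terraces and \#-harmonious sequences of abelian groups, which, unlike sequencings, \emph{are} closed under the needed direct products (the FGM construction of Theorem~\ref{th:fgm} and Corollary~\ref{cor:fgm}, and the BGHJ construction of Lemma~\ref{lem:hashharm}). Replacing your closure lemma with this two-tier strategy --- one semidirect product of full order~$n$, with product constructions applied only to the abelian ingredients --- turns your outline into the paper's proof.
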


This result also gives the spectrum for group-based row-complete Latin squares.  However, whereas all known complete Latin squares are group-based, there are alternative methods known for constructing row-complete squares.  Row-complete Latin squares are known to exist at orders~1 and~2 and at every composite order~\cite{Higham98, Williams49}.  They are known not to exist at order~3,~5 or~7.  The question remains open at other odd primes.

On the question of which groups are sequenceable, the abelian case is settled as mentioned above, and the three non-abelian groups of orders~6 and~8 are not sequenceable.  {\em Keedwell's Conjecture} is that all other non-abelian groups are sequenceable. In addition to those already mentioned, groups known to satisfy Keedwell's Conjecture include dihedral groups~\cite{Isbell90, Li97}, soluble groups with a single involution~\cite{AI93}, and groups of order at most~255~\cite{OllisTFSG}.  See~\cite{OllisSurvey} for a survey of this and related problems. 

In the next section we develop a framework for constructing sequencings of groups of the form~$G = \Z_q \ltimes A$ for prime~$q$ and odd-order abelian~$A$. In Section~\ref{sec:dt} we show that the framework can be successfully completed for a variety of choices of~$A$, whence Theorem~\ref{th:spectrum}.

\section{The construction}\label{sec:constr}

Let~$A$ be an abelian group of order~$m$ with an automorphism~$\alpha$ of prime order~$q$.  Let
$$G = \Z_q \ltimes_{\alpha} A =  \{ (u,v) : u \in \Z_q, \ v \in A\}, \hspace{6mm} (u,v)(x,y) = (u+x, \alpha^x(v) + y).$$
This is a group of order~$n = mq$. 
Let~$\la$ be a primitive root of~$q$ such that~$\la / (\la -1)$ is also a primitive root; Wang~\cite{Wang02} shows that the existence of such a~$\la$ follows from results of~\cite{CM91}.

Here is the template we use for a proposed directed terrace of~$G$:
$$
\begin{array}{l}
(0,g_1), (0,g_2), \ldots, (0, g_t), \\
(1, h_{11}), (\la^{q-2}, h_{21}), (\la^{q-3}, h_{31}), \ldots, (\la, h_{q-1,1}), \\
(1, h_{12}), (\la^{q-2}, h_{22}), (\la^{q-3}, h_{32}), \ldots, (\la, h_{q-1,2}), \\
\hspace{10mm} \vdots \\
(1, h_{1,m-1}), (\la^{q-2}, h_{2,m-1}), (\la^{q-3}, h_{3,m-1}), \ldots, (\la, h_{q-1,m-1}), \\
(\la, 0), (\la^2/(\la-1), 0), (\la^3/(\la-1)^2, 0), \ldots, (\la^{q-2}/(\la-1)^{q-3}, 0), (\la -1, 0), \\
(0, g_{t+1}), (0, g_{t+2}), \ldots, (0,g_m).
\end{array}
$$

This generalizes the structures of directed terraces in~\cite{Ollis14} and~\cite{Wang02} (each of which grew out of that of~\cite{Keedwell81}).

The corresponding sequencing (terms immediately after a semi-colon correspond to quotients formed from terms on different rows):
$$
\begin{array}{l}
(0,g_2-g_1), (0,g_3-g_2), \ldots, (0, g_t - g_{t-1}); (1, h_{11} - \alpha(a_t)), \\
(\la^{q-2} - 1, h_{21} - \alpha^{\la^{q-2}-1}(h_{11}) ), (\la^{q-3} - \la^{q-2}, h_{31} - \alpha^{\la^{q-3} - \la^{q-2}}(h_{21}) ), \ldots, \\ 
\hspace{10mm} (\la - \la^2,h_{q-1,1} - \alpha^{\la - \la^2}(h_{q-2,1})   ) ;  (1-\la, h_{12} - \alpha^{1-\la}(h_{q-1,1})    ), \\
(\la^{q-2} - 1, h_{22} - \alpha^{\la^{q-2}-1}(h_{12}) ), (\la^{q-3} - \la^{q-2}, h_{32} - \alpha^{\la^{q-3} - \la^{q-2}}(h_{22}) ), \ldots, \\ 
\hspace{10mm} (\la - \la^2,h_{q-1,2} - \alpha^{\la - \la^2}(h_{q-2,2})   ) ;  (1-\la, h_{13} - \alpha^{1-\la}(h_{q-1,2})    ), \\
\hspace{10mm} \vdots \\
(\la^{q-2} - 1, h_{2,m-1} - \alpha^{\la^{q-2}-1}(h_{1,m-1}) ), (\la^{q-3} - \la^{q-2}, h_{3,m-1} - \alpha^{\la^{q-3} - \la^{q-2}}(h_{2,m-1}) ), \ldots, \\ 
\hspace{10mm} (\la - \la^2,h_{q-1,m-1} - \alpha^{\la - \la^2}(h_{q-2,m-1})   ) ;  (0, -h_{q-1,m-1})    , \\
( \la^2/(\la-1) - \la ,0), (\la^3/(\la-1)^2 - \la^2/(\la-1)  ,0), \ldots, 
\\ \hspace{10mm}  ( \la -1 - \la^{q-2}/(\la-1)^{q-3}, 0 ); (-(\la -1), g_{t+1} ),  \\
(0,g_{t+2}-g_{t+1}), (0,g_{t+3}-g_{t+2}), \ldots, (0, g_m - g_{m-1}).
\end{array}
$$

What requirements must we satsify to ensure these are a directed terrace and sequencing?

Observe that $$\la^2/(\la-1) - \la, \la^3/(\la-1)^2 - \la^2/(\la-1), \ldots,  \la -1 - \la^{q-2}/(\la-1)^{q-3}$$ lists all of the elements~$\Z_q \setminus \{ 0,1 \}$.   We therefore require that $h_{11} - \alpha(g_t) = 0$ so that every element of the form~$(x,0)$, for~$x \in \Z_q \setminus \{ 0 \}$, appears in the sequencing.

We need that $g_1, g_2, \ldots g_m$ includes every element of~$A$ exactly once and that each of the following sequences list all elements of $A \setminus \{ 0 \}$:
$$\begin{array}{l}
 h_{11}, h_{12}, \ldots, h_{1,m-1} \\
 h_{21}, h_{22}, \ldots, h_{2,m-1} \\
\hspace{10mm} \vdots \\
 h_{q-1,1}, h_{q-1,2}, \ldots, h_{q-1,m-1} \\
g_2 - g_1, g_3 - g_2, \ldots, g_t - g_{t-1},  -h_{q-1,m-1}, g_{t+2} - g_{t+1},  \ldots, g_m - g_{m-1} \\
h_{21} - \alpha^{\la^{q-2}-1}(h_{11}) , h_{22} - \alpha^{\la^{q-2}-1}(h_{12}), \ldots, h_{2,m-1} - \alpha^{\la^{q-2}-1}(h_{1,m-1})  \\
h_{31} - \alpha^{\la^{q-3} - \la^{q-2}}(h_{21}), h_{32} - \alpha^{\la^{q-3} - \la^{q-2}}(h_{22}), \ldots,  h_{3,m-1} - \alpha^{\la^{q-3} - \la^{q-2}}(h_{2,m-1}) \\
\hspace{10mm} \vdots \\
h_{q-1,1} - \alpha^{\la - \la^2}(h_{q-2,1}), h_{q-1,2} - \alpha^{\la - \la^2}(h_{q-2,2}), \ldots, h_{q-1,m-1} - \alpha^{\la - \la^2}(h_{q-2,m-1}) \\
h_{12} - \alpha^{1-\la}(h_{q-1,1}), h_{13} - \alpha^{1-\la}(h_{q-1,2}), \ldots, h_{1,m-1} - \alpha^{1-\la}(h_{q-1,m-2}) ,  g_{t+1}
\end{array}$$
  
In order to meet these requirements, we introduce two auxilliary types of sequence.  
Let~${\bf a} = (a_1, a_2, \ldots, a_{m-1})$ be an arrangement of the non-identity elements of~$A$ and define~${\bf b} = (b_1, b_2, \ldots, b_{m-1})$ by~$b_i = a_{i+1} - a_i$, where the indices are calculated modulo~$m-1$ (so $b_{m-1} = a_1 - a_{m-1}$).  If~${\bf b}$ also contains all of the non-identity elements of~$A$, then ${\bf b}$ is a {\em rotational sequencing} or {\em R-sequencing} of~$A$ and ${\bf a}$ is the associated {\em directed rotational terrace} or {\em directed R-terrace}.

Similarly, let~${\bf c} = (c_1, c_2, \ldots, c_{m-1})$ be an arrangement of the non-identity elements of~$A$ and define~${\bf d} = (d_1, d_2, \ldots, d_{m-1})$ by~$d_i = c_{i} + c_{i+1}$, where, again, the indices are calculated modulo~$m-1$ (so $d_{m-1} = c_{m-1} + c_{1}$).  If~${\bf d}$ also contains all of the non-identity elements of~$A$, then ${\bf a}$ is a {\em \#-harmonious sequence} and~$A$ is {\em \#-harmonious}.

Both of these objects are of interest in their own right.  For example, the existence of either a rotational sequencing or a \#-harmonious sequence for a group implies that that group has a complete mapping and hence its Cayley table has an othogonal mate.  Rotational sequencings were introduced by Ringel~\cite{Ringel74} and studied extensively in~\cite{FGM78}.  They have various different but equivalent formulations in the literature.  \#-harmonious sequences were introduced and studied in~\cite{BGHJ91} and were studied further and named in~\cite{Wang07}.  In each case, the existence question is completely settled in abelian groups (each of them may also be studied in non-abelian groups, which is not relevant here):

\begin{thm}\label{th:hash_rseq}{\rm \cite{AKP17,BGHJ91,FGM78}}
An abelian group has a rotational sequencing if and only if it does not have exactly one involution.  An abelian group has a \#-harmonious sequence if and only if it is not~$\Z_3$ and does not have exactly one involution.
\end{thm}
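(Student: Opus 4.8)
The two ``only if'' directions both reduce to a single summation identity, so I would handle them together first. For an R-sequencing $\b$ arising from a directed R-terrace $\a$, the indices are cyclic modulo $m-1$, so the sum telescopes:
\[
\sum_{i=1}^{m-1} b_i = \sum_{i=1}^{m-1}(a_{i+1}-a_i) = 0 .
\]
On the other hand $\b$ lists every non-identity element, so $\sum_i b_i = \sum_{g\in A} g$; hence an R-sequencing forces $\sum_{g\in A} g = 0$. For a \#-harmonious sequence the same cyclic bookkeeping gives $\sum_i d_i = \sum_i (c_i+c_{i+1}) = 2\sum_i c_i = 2\sum_{g\in A} g$, which must again equal $\sum_{g\in A} g$, so once more $\sum_{g\in A} g = 0$. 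Pairing $g$ with $-g$ collapses this to the sum over the $2$-torsion subgroup $A[2]$, and that sum is nonzero exactly when $A[2]\cong\Z_2$, i.e.\ exactly when $A$ has a unique involution. This excludes the unique-involution case for both objects. For \#-harmonious sequences I would additionally rule out $A=\Z_3$ directly: its two non-identity elements sum to $0$, forcing $d_1=d_2=0$.

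The real work is the ``if'' direction, namely constructing the objects for every abelian $A$ that survives these obstructions. I would organise this around the primary decomposition $A\cong P\times B$ with $P$ the Sylow $2$-subgroup, the hypothesis being exactly that $P$ is trivial or noncyclic. The intended engine is a composition lemma: from the relevant structure on two factors (an R-sequencing on each, or an R-sequencing on one and a full sequencing on the other, with compatible cyclic indexing) one produces the structure on their direct product. Granting such a lemma, the problem reduces to the indecomposable pieces: the odd prime-power cyclic groups $\Z_{p^e}$ and the $2$-part. For the odd cyclic factors I would write explicit sequences built from a primitive-root ordering of the non-identity elements, in the spirit of the $\la$-indexed template of Section~\ref{sec:constr}, arranged so that the difference sequence $\b$ (respectively the sum sequence $\d$) sweeps out all non-identity elements; the finitely many exceptional small groups and the $\Z_3$ case I would settle by inspection.

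I expect the $2$-part together with the composition lemma to be the main obstacle. The difficulty is structural: a cyclic $2$-group $\Z_{2^k}$ has a unique involution and so admits \emph{neither} structure on its own, so a noncyclic $P=\Z_{2^{a_1}}\times\cdots\times\Z_{2^{a_k}}$ cannot be assembled factor-by-factor from pieces that each carry the property. The $2$-part must therefore be handled either by a direct construction on $P$ itself or by folding it into the composition step against an odd factor, and either route must respect two further constraints: both objects live on the non-identity elements and are indexed cyclically modulo $m-1$, so a naive coordinatewise product has the wrong length (\,$(m_1-1)(m_2-1)\neq m_1m_2-1$\,) and the ``missing'' identity coordinate must be re-threaded into the cyclic order; and in the \#-harmonious case the doubling identity and the $\Z_3$ exception must be carried through the induction so that no product step silently reintroduces an isolated involution or a $\Z_3$ factor. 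Once the composition lemma and the $2$-part base constructions are secured, the reduction to odd prime powers delivers Theorem~\ref{th:hash_rseq}.
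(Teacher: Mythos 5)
Your two necessity arguments are correct and complete: the cyclic telescoping forces $\sum_{g \in A} g = 0$ for any group admitting an R-sequencing, the doubling identity $\sum_i d_i = 2\sum_i c_i$ forces the same for a \#-harmonious sequence, and the sum of all elements of a finite abelian group is nonzero precisely when the group has exactly one involution; $\Z_3$ is rightly killed by inspection, since its two non-identity elements sum to $0$. This is the standard counting obstruction and matches the cited sources.

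The gap is the sufficiency half, which is where essentially all the content of Theorem~\ref{th:hash_rseq} lies, and your proposal only plans it. Note first that the paper itself offers no proof of this theorem---it is quoted from \cite{AKP17,BGHJ91,FGM78}, and the paper reconstructs only the fragments it needs, namely the odd-order cases in Lemma~\ref{lem:hashharm} and Theorem~\ref{th:fgm}. For the odd-order \#-harmonious branch your plan is sound and coincides with \cite{BGHJ91}: the length mismatch $(m-1)(n-1) \neq mn-1$ that you identify is exactly what the matched harmonious/\#-harmonious pairs repair, since splicing one length-$(m-1)$ \#-harmonious sequence with $n-1$ copies of a length-$m$ harmonious sequence yields $mn-1$ terms; this is precisely the induction recapitulated in the proof of Lemma~\ref{lem:hashharm}. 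But for R-sequencings of groups with noncyclic Sylow $2$-subgroup you correctly diagnose that no factor-by-factor induction can work---cyclic $2$-groups carry neither structure---and then you do not supply the replacement, deferring it to ``a direct construction on $P$ itself or folding it into the composition step.'' That case is not a routine base computation: it is the Friedlander--Gordon--Miller conjecture, posed in \cite{FGM78} and settled only in \cite{AKP17} via substantially heavier machinery (their ``gadget'' and orthomorphism constructions, a piece of which the present paper borrows in the proof of Theorem~\ref{th:3}). Saying the theorem follows ``once the composition lemma and the $2$-part base constructions are secured'' defers exactly the step that took forty years; as written, your proposal proves the easy direction and sketches the hard one, but does not prove the theorem.
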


We shall see some aspects of the known construction methods for rotational sequencings and \#-harmonious sequences in the next section.    Here we see how to combine them to make the task of completing the template successfully more tractable:

\begin{thm}\label{th:constr}
Let~$A$ be an abelian group of odd order~$m$ with an automorphism~$\alpha$ of odd prime order~$q$ and let~$\la$ be a primitive root of~$q$ such that~$\la / (\la-1)$ is also a primitive root of~$q$.
Let  ${\bf a} = (a_1, a_2, \ldots, a_{m-1})$ be a directed  rotational terrace and ${\bf c} = (c_1, c_2, \ldots, c_{m-1})$ be a \#-harmonious sequence for~$A$.  If $a_1 = c_1 + c_{m-1} - \alpha^{q-1}(c_1)$ and $a_{m-1} = a_1 - \alpha^{\la-1}(c_{m-1})$ then $G = \Z_q \ltimes_{\alpha} A$ is sequenceable.
\end{thm}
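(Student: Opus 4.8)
The plan is to fill in the template explicitly from the two given gadgets and then verify the displayed requirements one family at a time. The first thing to notice is that the requirements on the \emph{first} coordinates look after themselves: extending the observation that $\la^2/(\la-1)-\la,\ldots,\la-1-\la^{q-2}/(\la-1)^{q-3}$ runs through $\Z_q\setminus\{0,1\}$, one checks that the within-row first-coordinate differences $\la^{q-k-1}-\la^{q-k}$, the between-row difference $1-\la$, the opening step~$1$, and the block differences together cover each nonzero element of $\Z_q$ exactly $m$ times and $0$ exactly $m-1$ times (a total of $(q-1)m+(m-1)=qm-1$ terms, as it must be). Hence the entire problem is about the second coordinates, i.e.\ about choosing the array $(h_{kj})$ and the sequence $(g_i)$ so that the listed sequences each exhaust $A\setminus\{0\}$ (or, for the $g_i$, all of $A$).

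The key simplification I would make is to strip the automorphism twists out of the $h$-array. The exponents attached to the within-row differences telescope, $\sum_{i=1}^{k-1}(\la^{q-i-1}-\la^{q-i})=\la^{q-k}-1$, so setting $H_{kj}=\alpha^{1-\la^{q-k}}(h_{kj})$ (equivalently $h_{kj}=\alpha^{\la^{q-k}-1}(H_{kj})$) leaves the first row fixed, $H_{1j}=h_{1j}$, and converts each within-row difference into an untwisted one, $h_{k+1,j}-\alpha^{\la^{q-k-1}-\la^{q-k}}(h_{kj})=\alpha^{\la^{q-k-1}-1}(H_{k+1,j}-H_{kj})$. A short check shows the between-row exponent equals $1-\la^{q-(q-1)}=1-\la$, so the twist cancels there too and the wrap difference becomes simply $H_{1,j+1}-H_{q-1,j}$. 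Since an automorphism carries a listing of $A\setminus\{0\}$ to another listing, the $h$-array requirements are equivalent to these clean conditions on $(H_{kj})$: every row $H_{k,\ast}$ is a listing of $A\setminus\{0\}$; each of the $q-2$ column-consecutive differences $H_{k+1,\ast}-H_{k,\ast}$ is such a listing; and the shifted wrap $\{H_{1,j+1}-H_{q-1,j} : 1\le j\le m-2\}$ together with $g_{t+1}$ is such a listing.

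With the twists gone, the bulk of the array is handled by the rotational terrace: taking the rows to be successive cyclic shifts of $\a$, namely $H_{kj}=a_{j+k-1}$ with indices read modulo $m-1$, makes every row a listing of $A\setminus\{0\}$ and every column-consecutive difference a cyclic shift of the R-sequencing $\b$, disposing of all $q-1$ row conditions and all $q-2$ column-difference conditions simultaneously. The obstruction is that the wrap $H_{1,j+1}-H_{q-1,j}$ is then a difference of $\a$ across a long gap, which is not controlled; this is exactly where the \#-harmonious sequence enters, the end rows being adjusted so that the wrap instead reads as a sum $c_j+c_{j+1}=d_j$ and so runs through $A\setminus\{0\}$. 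For the $g_i$ I would arrange $g_1,\ldots,g_m$ to be all of $A$ with its internal differences (the omitted gap at $t$ being supplied by $-h_{q-1,m-1}$) exhausting $A\setminus\{0\}$, and pin down the split point~$t$ and the value $g_t$ through the seam equation $h_{11}=\alpha(g_t)$, i.e.\ $H_{11}=\alpha(g_t)$.

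I expect the wrap to be the whole fight. Reconciling ``rows are shifts of $\a$'' with ``the wrap is a \#-harmonious sum in $\c$'' forces a precise matching of the first and last rows of the untwisted array, and it is here that the two hypotheses $a_1=c_1+c_{m-1}-\alpha^{q-1}(c_1)$ and $a_{m-1}=a_1-\alpha^{\la-1}(c_{m-1})$ must be consumed: the first aligns the start of the wrap (recall $\alpha^{q-1}=\alpha^{-1}$), while the second aligns its end (the exponent $\la-1$ being the negative of the between-row exponent $1-\la$). Pushing the $\alpha$-twists back through these two corners, verifying that the single leftover value lands precisely on $g_{t+1}$, and confirming that $-h_{q-1,m-1}$ is exactly the one difference missing from the $g$-list, is the delicate part; everything else then collapses to the defining properties of $\a$ and $\c$ and the fact that each power of $\alpha$ permutes $A\setminus\{0\}$.
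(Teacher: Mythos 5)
There is a genuine gap, and it sits exactly where you flagged ``the whole fight.'' Your untwisting substitution $H_{kj}=\alpha^{1-\la^{q-k}}(h_{kj})$ is correct and is a clean equivalent reformulation of the verification the paper does directly, and you correctly identify that the wrap differences should become the \#-harmonious sums $c_j+c_{j+1}$ and that the two hypotheses $a_1 = c_1+c_{m-1}-\alpha^{q-1}(c_1)$ and $a_{m-1}=a_1-\alpha^{\la-1}(c_{m-1})$ must align the two ends of the wrap. But your proposed $h$-array --- rows as successive cyclic shifts $H_{kj}=a_{j+k-1}$ of the directed R-terrace, with ``the end rows adjusted'' to force the wrap --- cannot be repaired as sketched: replacing rows $1$ and $q-1$ by $\c$-based rows destroys the column-consecutive difference conditions $H_{2,\ast}-H_{1,\ast}$ and $H_{q-1,\ast}-H_{q-2,\ast}$, which your shift structure was supposed to guarantee, and you offer no mechanism to restore them. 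You have deferred rather than solved the simultaneous constraint problem (all rows listings, all $q-2$ column differences listings, wrap listings), and your $g$-paragraph likewise restates the requirement (``all of $A$ with internal differences exhausting $A\setminus\{0\}$'') rather than constructing it.

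The paper's resolution inverts your allocation of the two gadgets. The R-terrace $\a$ never touches the $h$-array: it is used only for the $g$-block, via $t=1$, $g_1=\alpha^{q-1}(c_1)$ and $g_i = a_{i-1}+\alpha^{q-1}(c_1)$, so that the $g$-differences are the R-sequencing with the single missing element $a_1-a_{m-1}$ patched by $-h_{q-1,m-1}=\alpha^{\la-1}(c_{m-1})$ (your second hypothesis) and with $g_{t+1}=c_1+c_{m-1}$ supplying the missing sum $d_{m-1}$ (your first hypothesis). The entire $h$-array is built from $\c$ alone by alternating $h_{ij}=c_j$ on odd rows and $h_{ij}=-\alpha^{\la-1}(c_j)$ on even rows. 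Since $q-1$ is even, row $q-1$ carries the negated twisted copy and the wrap telescopes to $c_{j+1}+c_j$ exactly as you wanted; meanwhile every within-column difference sequence becomes the image of $(c_1,\ldots,c_{m-1})$ under a map of the form $\pm\left(\alpha^{x}+\alpha^{y}\right)$, and such maps are bijections on an odd-order abelian group (if $\alpha^{z}(v)=-v$ then applying this $q$ times gives $v=-v$, so $v=0$). This sign-alternation trick --- converting the uncontrollable long-gap differences into sums handled by invertibility of $\alpha^{x}+\alpha^{y}$ --- is the single missing idea in your proposal, and without it the verification does not close.
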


\begin{proof}
First, we use~${\bf a}$ and~${\bf c}$ to assign values to the~$g_i$ and~$h_{ij}$ in the template.  We shall see that the difference/sum properties they have, combined with the extra conditions, allow us to satisfy all of the constraints.

Let~$g_1 = \alpha^{q-1}(c_1)$ and let $g_i = a_{i-1} + \alpha^{q-1}(c_1)$ for~$i>1$.  As~${\bf a}$ is a directed rotational terrace the sequence $g_1, g_2, \ldots g_m$ includes every element of~$A$ once.  

For odd~$i$ set $h_{ij} = c_j$; for even~$i$ set~$h_{ij} = -  \alpha^{\la -1}(c_j)$.  Set~$t=1$.  This gives 
$$h_{11} - \alpha(g_t) = c_1  - \alpha( \alpha^{q-1}(c_1) )  = 0$$
as required.  We now need to consider the sequences that are required to contain all of the non-zero elements of~$A$.  With the assignments of the~$g_i$, $h_{ij}$ and~$t$, these become:
$$\begin{array}{l}
 c_{1}, c_{2}, \ldots, c_{m-1} \\
\alpha^{\la -1}(c_1), \alpha^{\la -1}(c_2), \ldots, \alpha^{\la -1}(c_{m-1}) \\
 \alpha^{\la -1}(c_{m-1}) , a_{2} - a_{1}, a_3 - a_2,  \ldots, a_m - a_{m-1} \\
  
- \alpha^{\la -1}(c_{1}) - \alpha^{\la^{q-2}-1}(c_{1}) , - \alpha^{\la -1}(c_{2}) - \alpha^{\la^{q-2}-1}(c_{2}), \ldots, - \alpha^{\la -1}(c_{m-1}) - \alpha^{\la^{q-2}-1}(c_{m-1})  \\  
  
c_{1} + \alpha^{\la^{q-3} - \la^{q-2}}(\alpha^{\la -1}(c_{1})), c_{2} + \alpha^{\la^{q-3} - \la^{q-2}}(\alpha^{\la -1}(c_{2})), \ldots,  c_{m-1} + \alpha^{\la^{q-3} - \la^{q-2}}(\alpha^{\la-1}(c_{m-1})) \\  
  \hspace{10mm} \vdots \\
  
- \alpha^{\la -1}(c_{1}) - \alpha^{\la - \la^2}(c_{1}), - \alpha^{\la -1}(c_{2}) - \alpha^{\la - \la^2}(c_{2}), \ldots, - \alpha^{\la -1}(c_{m-1}) - \alpha^{\la - \la^2}(c_{m-1}) \\
 
 c_{2} +c_1 , c_3 + c_2, \ldots, c_{m-1} + c_{m-2} ,  a_{1} + \alpha^{q-1}(c_1).
 
\end{array}$$

The first two each contain the non-zero elements of~$A$ as~${\bf c}$ is a \#-harmonious sequence.   The third does because it comprises all of the rotational sequencing elements apart from~$a_1 - a_{m-1}$ and in its place we have $\alpha^{\la -1}(c_{m-1})  = a_1 - a_{m-1}$.  

That each of the fourth through to the penultimate sequences have the required elements follows from the properties of the automorphism~$\alpha$ and that~$c_1, c_2, \ldots, c_{m-1}$ are distinct and non-zero.

Finally, the last one is satisfied because we have all the sums of the \#-harmonious sequence~${\bf c}$, except $c_{m-1} + c_{1}$, and in its place we have
$a_{1} +   \alpha^{q-1}(c_1)  = c_1 + c_{m-1}$.
\end{proof}

Theorem~\ref{th:constr} is not the only way to successfully complete the template.  For example, in~\cite{Wang02} there is an alternative scheme for assigning the~$h_{ij}$ elements when~$A$ is cyclic of prime order and in~\cite{Ollis14} a different way of assigning the $g_i$ elements is used when~$q=3$.  These alternatives (and others) have potential for proving the sequenceability of more groups than we consider in the next section.

\section{The directed terraces}\label{sec:dt}

The target for this section is the proof of Theorem~\ref{th:spectrum}.   First we consider some structural properties of groups.  The following is well-known; see, for example,~\cite{PS00}.

\begin{lem}\label{lem:nonabnumbers}
Let $n = p_1^{a_1} \cdots p_t^{a_t}$, where the $p_i$ are distinct odd primes.
There is a nonabelian group of order~$n$ if and only if one of the following conditions applies:
\begin{enumerate}
\item $a_i \geq 3$ for some~$i$,
\item $n$ is cube-free and $p_i \equiv 1 \pmod{p_j}$ for some~$i,j$, 
\item $n$ is cube-free and $p_i^2 \equiv 1 \pmod{p_j}$ for some~$i,j$
\end{enumerate}
\end{lem}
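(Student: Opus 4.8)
The statement is a classification of the orders at which every group is abelian, and I would prove it in that contrapositive form. Writing the \emph{failure} of all three conditions as (i) $n$ is cube-free, (ii) $p_i \not\equiv 1 \pmod{p_j}$ for all $i,j$, and (iii) $p_i^2 \not\equiv 1 \pmod{p_j}$ whenever $p_i^2 \mid n$, I would show that every group of order $n$ is then abelian; conversely, whenever one of the three conditions holds I would exhibit a nonabelian group of order $n$. Note that conditions~2 and~3 force $i \neq j$, and that the congruence in~3 is only relevant when $p_i^2$ genuinely divides $n$, so in the cube-free case it applies exactly when $a_i = 2$.

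For the \emph{if} direction I would build a nonabelian group in each case and pad it out with a cyclic factor of the complementary order. If $a_i \geq 3$ then $p_i^3 \mid n$ and there is a nonabelian group $H$ of order $p_i^3$ (for instance the group of unitriangular $3\times 3$ matrices over $\F_{p_i}$); then $H \times \Z_{n/p_i^3}$ works. If instead $n$ is cube-free and $p_i \equiv 1 \pmod{p_j}$, then $p_j \mid p_i - 1 = |\mathrm{Aut}(\Z_{p_i})|$, so there is a nontrivial action of $\Z_{p_j}$ on $\Z_{p_i}$ and $(\Z_{p_i} \rtimes \Z_{p_j}) \times \Z_{n/(p_ip_j)}$ is nonabelian. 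Finally, if $n$ is cube-free, $p_i^2 \mid n$ and $p_i^2 \equiv 1 \pmod{p_j}$, then $p_j \mid p_i^2 - 1$, which divides $|\gl_2(\F_{p_i})|$; by Cauchy's theorem $\gl_2(\F_{p_i})$ has an element of order $p_j$, giving a nontrivial action of $\Z_{p_j}$ on $\Z_{p_i} \times \Z_{p_i}$ and hence a nonabelian group $\bigl((\Z_{p_i} \times \Z_{p_i}) \rtimes \Z_{p_j}\bigr) \times \Z_{n/(p_i^2 p_j)}$.

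For the \emph{only if} direction I would argue by induction on $|G|$ that every group $G$ of order $n$ satisfying (i)--(iii) is abelian, the point being that these hypotheses are inherited by every divisor of $n$. Let $p$ be the smallest prime dividing $n$ and $P$ a Sylow $p$-subgroup, so $|P| \in \{p, p^2\}$ and $P$ is abelian. I would first show $N_G(P) = C_G(P)$: the quotient $N_G(P)/C_G(P)$ embeds in $\mathrm{Aut}(P)$, and since $P$ is an abelian Sylow subgroup we have $p \nmid |N_G(P)/C_G(P)|$, while every other prime divisor of $n$ exceeds $p$ and so cannot divide $|\mathrm{Aut}(P)|$, whose prime factors lie among $p$ and the divisors of $p-1$ or $p^2-1$ (all at most $p+1$, and $p+1$ is even). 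Burnside's normal $p$-complement theorem then yields a normal complement $K$ with $G = K \rtimes P$ and $|K| = n/|P|$. By induction $K$ is abelian, hence the direct product of its Sylow subgroups $K_j$, each characteristic in $K$ and so normalised by $P$. The image of $P$ in $\mathrm{Aut}(K_j)$ is a $p$-group, so a nontrivial action would force $p \mid |\mathrm{Aut}(K_j)|$; but for $|K_j| = p_j$ this means $p_j \equiv 1 \pmod p$ (excluded by~(ii)) and for $|K_j| = p_j^2$ it means $p \mid p_j^2 - 1$, i.e.\ $p_j^2 \equiv 1 \pmod p$ with $p_j^2 \mid n$ (excluded by~(iii)). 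Hence $P$ centralises $K$, so $G = K \times P$ is abelian and the induction closes.

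The main obstacle is establishing the normal $p$-complement: counting Sylow subgroups alone does not force $n_p = 1$ once $n_p$ may be composite, so the argument instead selects the \emph{smallest} prime $p$ and controls $\mathrm{Aut}(P)$ via Burnside's theorem — this is precisely where the split into $|P| = p$ and $|P| = p^2$ and the parity remark matter. Everything else is routine bookkeeping with the automorphism groups of groups of order $p$ and $p^2$, verifying that (ii) and (iii) are exactly the obstructions to a nontrivial $p$-action on each Sylow factor of the complement.
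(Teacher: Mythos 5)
Your proof is correct, but note that the paper does not actually prove this lemma at all: it is stated as well-known and dispatched with a citation to Pakianathan and Shankar~\cite{PS00}, where the characterization of abelian orders falls out of a characterization of nilpotent numbers. So your writeup supplies a self-contained argument where the paper buys brevity by citation. Your forward direction (Heisenberg group for $a_i \geq 3$; $(\Z_{p_i} \rtimes \Z_{p_j}) \times \Z_{n/(p_ip_j)}$ when $p_j \mid p_i - 1$; a Cauchy element of order $p_j$ in $\gl(2,p_i)$ acting on $\Z_{p_i}^2$ when $p_j \mid p_i^2 - 1$ and $p_i^2 \mid n$) is the standard one, and your converse --- Burnside's normal $p$-complement theorem at the \emph{smallest} prime $p$, using that $P \leq C_G(P)$ kills the $p$-part of $N_G(P)/C_G(P)$ while the odd prime factors of $p^2 - 1$ are all less than $p$ (since $p+1$ is even and $n$ is odd), followed by induction on the complement $K$ and the observation that a nontrivial action of $P$ on a Sylow factor $K_j$ forces $p \mid p_j - 1$ or $p \mid p_j^2 - 1$ with $p_j^2 \mid n$, exactly the excluded congruences --- is sound at every step; the hypotheses are indeed inherited by $|K|$, so the induction closes. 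One point deserves emphasis: you correctly repaired the statement itself. Condition~3 as literally printed (no requirement that $p_i^2 \mid n$) is false --- for $n = 15$ we have $5^2 \equiv 1 \pmod{3}$ yet every group of order $15$ is cyclic --- and your reading, that the congruence $p_i^2 \equiv 1 \pmod{p_j}$ applies only when $a_i = 2$, is the intended one; it matches the formulation the paper actually uses in Theorem~1.2, where the condition is imposed on a prime-power \emph{divisor} $p^k$ of $n$.
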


We are able to meet all of these orders using groups of the form~$\Z_q \ltimes A$ for prime~$q$ and abelian~$A$.  When~$A \cong \Z_m$ there is the necessary automorphism of order~$q$ when either $q^2 | m$ or~$p | m$ for some prime~$p \equiv 1 \pmod{q}$.  These groups cover the first two cases of Lemma~\ref{lem:nonabnumbers}.  For the third case we use groups of the form~$A \cong \Z_p^2 \times B$ where~$p^2 \equiv 1 \pmod{q}$ and $p \nmid |B|$.  As it is little extra work, when~$3 \nmid |B|$ we consider groups of the form~$A \cong \Z_p^k \times B$ for any~$k \geq 2$.

We shall need an explicit construction for rotational sequencings for cyclic groups.  Define a {\em graceful permuation} of length~$k$ to be an arrangement $(g_1, g_2, \ldots, g_k)$ of the integers $\{ 1, 2, \ldots, k \}$ such that the absolute differences ${\bf h} = (h_1, h_2, \ldots, h_{k-1})$ given by $h_i = |g_{i+1} - g_i|$ are distinct.  This is equivalent to a graceful labeling of a path; graceful labelings of graphs are well-studied, see~\cite{GallianSurvey} for details.

For example, $(1,k,2,k-1,\ldots, \lfloor k/2 \rfloor + 1)$ is a graceful permutation of length~$k$ known as the {\em Walecki Construction}, see~\cite{Alspach08}.

Here is the connection between graceful permutations and rotational sequencings:

\begin{lem}\label{lem:g2rseq}{\rm \cite{FGM78}}
If $(g_1, g_2, \ldots, g_k)$ is a graceful permutation then $$(g_1, g_2, \ldots, g_k, g_k + k, g_{k-1} + k, \ldots, g_1 + k)$$ is a directed rotational terrace for~$\Z_{2k+1}$.
\end{lem}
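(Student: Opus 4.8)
The plan is to check directly the two defining conditions of a directed rotational terrace for the candidate sequence, which I write as $(a_1, \ldots, a_{2k})$ with $a_i = g_i$ for $1 \le i \le k$ and $a_{k+j} = g_{k+1-j} + k$ for $1 \le j \le k$. Since $\Z_{2k+1}$ has order $m = 2k+1$, an R-terrace must be an arrangement of the $m-1 = 2k$ non-identity elements whose cyclic consecutive differences again list all $2k$ non-identity elements.

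First I would verify the arrangement condition. The first block $g_1, \ldots, g_k$ is a permutation of $\{1, \ldots, k\}$ by hypothesis, and the second block $g_k + k, \ldots, g_1 + k$ is therefore a permutation of $\{k+1, \ldots, 2k\}$. Hence the full sequence lists each element of $\{1, 2, \ldots, 2k\} = \Z_{2k+1} \setminus \{0\}$ exactly once.

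The substance is the difference condition. I would compute the $2k$ cyclic differences $b_i = a_{i+1} - a_i$ (indices modulo $2k$, so $b_{2k} = a_1 - a_{2k}$) and sort them into four groups: the first-block differences $g_{i+1} - g_i$ for $1 \le i \le k-1$; the junction difference $b_k = (g_k + k) - g_k = k$; the second-block differences, which after the substitution $i = k-j$ equal $-(g_{i+1} - g_i)$ for $1 \le i \le k-1$ (the first-block differences negated and listed in reverse); and the wrap-around difference $b_{2k} = g_1 - (g_1 + k) = -k \equiv k+1 \pmod{2k+1}$. The key observation is that, working in $\Z_{2k+1}$, each first-block difference together with its negated counterpart from the second block forms the pair $\{h_i,\ 2k+1 - h_i\}$, where $h_i = |g_{i+1} - g_i|$ is the corresponding absolute difference of the graceful permutation; this holds regardless of the sign of $g_{i+1} - g_i$. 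Because the permutation is graceful the $h_i$ are distinct and so run over all of $\{1, \ldots, k-1\}$, whence these pairs contribute exactly $\{1, \ldots, k-1\} \cup \{k+2, \ldots, 2k\}$. The junction and wrap-around differences supply $k$ and $k+1$, filling the gap, so altogether the differences are precisely $\{1, 2, \ldots, 2k\}$, each once, establishing the R-terrace property.

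The only delicate point is the bookkeeping in the second block: one must track the index reversal carefully and confirm that the sign conventions, once reduced modulo $2k+1$, turn the distinct absolute differences $\{1, \ldots, k-1\}$ into the complementary pairs $\{h,\ 2k+1-h\}$ rather than producing collisions. Everything else is routine counting.
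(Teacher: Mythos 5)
Your verification is correct and complete; the paper itself states this lemma without proof, simply citing \cite{FGM78}, and your direct computation of the $2k$ cyclic differences---the complementary pairs $\{h_i,\, 2k+1-h_i\}$ contributed by the two blocks, plus the junction value $k$ and the wrap-around value $-k \equiv k+1$---is exactly the standard argument behind the cited result. The one step you use slightly implicitly, that the $k-1$ distinct absolute differences of a graceful permutation each lie in $\{1,\ldots,k-1\}$ and hence exhaust that set by pigeonhole, is stated and invoked correctly, so there is no gap.
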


We also need the following fact:
\begin{lem}\label{lem:g1}{\rm \cite{FFG83,Gvozdjak04}}
There is a graceful permutation~$(g_1, g_2, \ldots, g_k)$ with~$g_1 = x$ for each~$1 \leq x \leq k$.
\end{lem}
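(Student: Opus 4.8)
The plan is to recast the problem in terms of prefix sums and then build the permutation by a Walecki-type zig-zag anchored at the prescribed entry. Writing $g_{i+1} = g_i + \delta_i$ with $\delta_i = \pm h_i$ and $h_i = |g_{i+1}-g_i|$, the sequence $(g_1, \ldots, g_k)$ is a graceful permutation with $g_1 = x$ precisely when $\{|\delta_1|, \ldots, |\delta_{k-1}|\} = \{1, \ldots, k-1\}$ and the partial sums $S_0 = 0$, $S_\ell = \delta_1 + \cdots + \delta_\ell$ run through the $k$ consecutive integers $\{1-x, \ldots, k-x\}$. Setting $a = x-1$ and $b = k-x$ (so $a + b = k-1$), the task becomes: construct a walk on the integer interval $[-a, b]$ that starts at $0$, visits each integer exactly once, and uses each jump size $1, 2, \ldots, a+b$ exactly once. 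This reformulation is convenient because it places the prescribed starting point at the origin and exposes the two parameters $a, b$ that we must handle.

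First I would reduce the number of cases using symmetry. Complementation $g_i \mapsto k+1-g_i$ preserves gracefulness and sends the starting value $x$ to $k+1-x$; in the reformulation it simply swaps $a$ and $b$. Hence it suffices to treat $a \le b$, that is, starting values in the lower half $x \le (k+1)/2$. (Reversal, $g_i \mapsto g_{k+1-i}$, is also a symmetry and converts a prescribed first entry into a prescribed last entry, which is occasionally useful.) Two extreme cases are immediate: $a = 0$ (that is, $x = 1$) is the Walecki construction already recorded, and the near-central value is handled by the alternating-sign walk with jumps $+1, -2, +3, -4, \ldots$, whose partial sums $0, 1, -1, 2, -2, 3, \ldots$ sweep out an interval symmetric about $0$.

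For a general pair $(a, b)$ with $a \le b$ I would use the largest jumps to bridge the two ends of the interval and the small jumps to fill its interior, in Walecki fashion: the jump of size $a+b$ connects the extreme points $-a$ and $b$, the jump of size $a+b-1$ connects an extreme point to the one adjacent to the opposite extreme, and so on, while one short jump is reserved to leave the origin and the remaining short jumps fill whichever side is not yet covered. The bookkeeping must guarantee two things simultaneously: that every jump size $1, \ldots, a+b$ is used exactly once (in particular that the size-$1$ jump is not accidentally duplicated by the initial step off the origin), and that the walk lands on each integer of $[-a,b]$ exactly once. I expect this accounting to be the main obstacle: because the global pool of jump sizes is shared between the positive and negative parts of the interval, a single uniform formula tends to produce either a collision or a missed value, and one is driven to a short case analysis on the parities or the relative sizes of $a$ and $b$. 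The parity identity $S_{k-1} = g_k - x \equiv 1 + 2 + \cdots + (k-1) = \binom{k}{2} \pmod 2$ both constrains where the walk can end and explains why the far endpoint must be left free; it also serves as a consistency check on each case. Once every relevant class of $(a,b)$ has been exhibited — the constructions of \cite{FFG83, Gvozdjak04} do exactly this — the symmetry reduction completes all remaining starting values, proving the lemma.
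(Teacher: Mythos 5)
You should note at the outset that the paper contains no proof of this lemma to compare against: it is quoted verbatim from \cite{FFG83,Gvozdjak04}, and the heavy lifting lives in those references. Judged as a standalone argument, your setup is correct and well chosen: the prefix-sum reformulation with $a = x-1$, $b = k-x$ is exact; the complementation $g_i \mapsto k+1-g_i$ does swap $a$ and $b$ and legitimately halves the cases; the observation that the jump of size $a+b$ is forced to join the extremes $-a$ and $b$ (since the difference $k-1$ can only arise between the values $1$ and $k$) is right; and the parity identity $g_k - x \equiv \binom{k}{2} \pmod 2$ is a genuine constraint that any construction must respect.

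The gap is that the proof stops exactly where the content of the lemma begins. Your explicit constructions cover only $a=0$ (Walecki, i.e.\ $x=1$, and by complementation $x=k$) and the balanced alternating walk (the central value or two). For every other pair $(a,b)$ you offer a bridging heuristic --- largest jumps span the ends, small jumps fill the interior --- then concede that a uniform formula ``tends to produce either a collision or a missed value,'' and finally write that ``the constructions of \cite{FFG83,Gvozdjak04} do exactly this.'' That last step is circular: those are precisely the sources the lemma cites, and the case analysis they carry out (explicit jump sequences for each residue class of the parameters, with verification that the partial sums are injective on $[-a,b]$ and the jump sizes exhaust $\{1,\ldots,k-1\}$) \emph{is} the proof. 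Nothing in your sketch produces such a sequence for a general $a \le b$, nor does it show that the heuristic can always be completed; without that, the lemma is restated and correctly framed, but not proved.
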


To extend rotational sequencings to non-cyclic groups we need a strengthening of the definition.  Let~${\bf a} = (a_1, a_2, \ldots, a_{m-1})$ be a directed R-terrace.  If $a_i  = a_{i-1} + a_{i+1}$ for some~$i$ then~${\bf a}$ is a {\em directed R$^*$-terrace} and its associated rotational sequencing is an {\em R$^*$-sequencing}.  If~$i=1$ then both the directed R$^*$-terrace and the R$^*$-sequencing are {\em standard}.  Any directed R$^*$-terrace may be made standard by re-indexing.

\begin{thm}\label{th:fgm}{\rm \cite{FGM78}}
Suppose $3 \nmid 2k+1$ and $A$ is an abelian group of odd order~$m$.
If $A$ is R$^*$-sequenceable then so is $A \times \Z_{2k+1}$.
\end{thm}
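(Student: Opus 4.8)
The plan is to give an explicit product construction: starting from a standard directed R$^*$-terrace ${\bf a} = (a_1, a_2, \ldots, a_{m-1})$ for~$A$ and a directed R-terrace for~$\Z_{2k+1}$, I would build a directed R-terrace for the direct product~$A \times \Z_{2k+1}$ by interleaving copies of the two structures, and then verify that the result is itself of R$^*$-type (so that the construction can be iterated and feed back into the same theorem). The condition $3 \nmid 2k+1$ should be exactly what is needed to guarantee that an appropriate directed R-terrace for~$\Z_{2k+1}$ exists and behaves well: by Lemma~\ref{lem:g1} there is a graceful permutation with any prescribed first entry, and Lemma~\ref{lem:g2rseq} converts it into a directed rotational terrace for~$\Z_{2k+1}$, so I have a good deal of freedom to choose the cyclic piece to match up with the R$^*$-structure of~${\bf a}$.

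First I would fix notation: write $M = m(2k+1) - 1$ for the number of non-identity elements of $A \times \Z_{2k+1}$, and think of the desired terrace as a sequence of pairs $(a, z)$ with $a \in A$ and $z \in \Z_{2k+1}$. The natural template is to run through ``blocks'': in each block the $A$-coordinate is held (nearly) fixed while the $\Z_{2k+1}$-coordinate sweeps out a directed rotational terrace of~$\Z_{2k+1}$, and between blocks the $A$-coordinate advances according to~${\bf a}$. The key design choice is how to align the blocks so that the consecutive differences $(a_{i+1}-a_i,\, z_{j+1}-z_j)$ range over \emph{every} non-identity element of the product exactly once. I would arrange that within a block the first coordinate of each difference is~$0$ and the second ranges over all of $\Z_{2k+1}\setminus\{0\}$ (this uses the R-sequencing property of the cyclic piece), while the block-boundary differences have nonzero first coordinate ranging over $A \setminus \{0\}$ (this uses the R-sequencing property of~${\bf a}$), with the second coordinates at the boundaries chosen to complete a transversal. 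Tracking the second coordinate across boundaries is where the arithmetic becomes delicate: because the blocks are linked cyclically (indices mod~$m-1$ for~${\bf a}$ and the rotational wrap-around of the cyclic terrace), the accumulated shifts in the $\Z_{2k+1}$-coordinate must close up correctly, and this is the step where $3 \nmid 2k+1$ and the R$^*$-condition $a_1 = a_{m-1} + a_2$ (standardness) are expected to pay off, pinning down the one ``free'' boundary difference so that the whole collection of differences is a genuine permutation.

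The main obstacle I anticipate is precisely this closure/accounting at the block boundaries: verifying that the $m-1$ boundary differences together with the $(m-1)(2k)$ in-block differences account for each of the $M$ non-identity elements of $A \times \Z_{2k+1}$ once and once only. This is essentially a simultaneous-transversal argument in the two coordinates, and the cyclic wrap-arounds mean one cannot treat the coordinates wholly independently; I would expect to need the R$^*$-relation $a_1 = a_{m-1}+a_2$ to resolve the ``extra'' difference that the rotational structure introduces, exactly as the marked relation $\alpha^{\la-1}(c_{m-1}) = a_1 - a_{m-1}$ was used in the proof of Theorem~\ref{th:constr} to repair one slot of a sequence. A secondary obligation, which I would not skip, is to confirm that the constructed directed R-terrace for $A \times \Z_{2k+1}$ is again an R$^*$-terrace, i.e.\ that some consecutive triple satisfies $a_i = a_{i-1}+a_{i+1}$; I would try to engineer this at the very first boundary so that, after the standard re-indexing mentioned before the theorem, the output is a \emph{standard} R$^*$-terrace and the construction composes with itself. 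Once the transversal count and the R$^*$-relation are checked, the theorem follows immediately.
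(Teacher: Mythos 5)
Your template has the roles of the two coordinates the wrong way round, and this is fatal rather than a detail to be repaired at the ``closure'' step. In a directed R-terrace for $A \times \Z_{2k+1}$ the cyclic consecutive differences must hit each non-identity element exactly once, so exactly $2k$ differences have first coordinate $0$ (one for each $(0,z)$ with $z \neq 0$), while $(m-1)(2k+1)$ differences must have nonzero first coordinate. Your design produces $(m-1)(2k)$ in-block differences of the form $(0,*)$ --- so each $(0,z)$ would recur about $m-1$ times, violating injectivity outright --- and only $m-1$ block-boundary differences with nonzero first coordinate, far too few to cover the $(m-1)(2k+1)$ elements $(a,z)$ with $a \neq 0$. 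Indeed your own tally, $(m-1)(2k) + (m-1) = (m-1)(2k+1)$, falls short of the required total $m(2k+1)-1$ by $2k$. No choice of rotational terrace for the cyclic piece and no use of the standardness relation $a_1 = a_{m-1} + a_2$ can fix this; the block shape itself is combinatorially impossible. A correct construction (the one in the paper, following Friedlander, Gordon and Miller) does the opposite of what you propose: the \emph{first} coordinates sweep the R$^*$-terrace of $A$ repeatedly --- $a_1, \ldots, a_{m-1}$ followed by $k$ passes beginning $a_1, a_1, a_2, \ldots$ and $k$ passes beginning $0, 0, a_2, \ldots$, the doubled initial entries supplying exactly the $2k$ differences with first coordinate $0$ --- while the \emph{second} coordinates zig-zag in pairs $x, -x$ (lines $2k,1,2k,1,\ldots$; then $2k-2,2,\ldots$; etc.), so that each nonzero difference of $A$ gets paired once with every element of $\Z_{2k+1}$.

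Two smaller points. First, your reading of the hypothesis $3 \nmid 2k+1$ is off: every cyclic group of odd order (including $\Z_3$) has a directed rotational terrace by Theorem~\ref{th:hash_rseq}, so the hypothesis is not about existence of the cyclic ingredient; it is what makes the $\pm x$ zig-zag pairing of second coordinates close up correctly. Second, even taken on its own terms your text is a plan, not a proof: the construction is never written down and the decisive transversal verification is explicitly deferred, so there is nothing that could be checked even if the template were sound.
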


\begin{proof}[Proof Construction]
Let $(a_1, a_2, \ldots, a_{m-1})$ be a standard directed R$^*$-terrace for~$A$.

Exactly as in~\cite{FGM78} we list the sequences of first and second coordinates of a standard directed R$^*$-terrace for $A \times \Z_{2k+1}$ separately.

The first coordinates are given by $a_1, a_2, \ldots a_{m-1}$, followed by~$k$ copies of the length-$m$ sequence
$$ a_1, a_1, a_2, a_3, \ldots a_{m-1}$$
and then $k$ copies of the length-$m$ sequence
$$ 0, 0, a_2, a_3, \ldots a_{m-1}.$$

The second coordinates are given by $m-2$ 0s, followed by
$$
\begin{array}{l}
2k, 1, 2k ,1, \ldots, 2k, 1, 2 \\
2k-2, 2, 2k-2, 2, \ldots, 2k-2,2, 4 \\
\vdots \\
1, 2k, 1, 2k, \ldots, 1, 2k, 2r-1
\end{array}
$$
(where each line has $m$ elements starting with $(m-1)/2$ pairs of the form $x, -x$)
and then one final 0.
\end{proof}

The requirement that~$m$ is odd in Theorem~\ref{th:fgm} is unnecessary, but we do not require the construction for the even case.  We particularly need:

\begin{cor}\label{cor:fgm}{\rm \cite{FGM78}}
Let~$A$ and~$B$ be abelian groups of odd order.  Suppose that $A$ has a standard directed R$^*$-terrace $(a_1, \ldots, a_{m-1})$ and $3 \nmid |B|$. Then for any~$i$ with $1 \leq i \leq m-3$, the group $A \times B$ has a standard directed R$^*$-terrace.
\end{cor}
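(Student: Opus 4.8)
The plan is to obtain the corollary from Theorem~\ref{th:fgm} by peeling off the cyclic factors of~$B$ one at a time. By the structure theorem for finite abelian groups I would write $B \cong \Z_{n_1} \times \cdots \times \Z_{n_r}$; since $B$ has odd order and $3 \nmid |B| = n_1 \cdots n_r$, every $n_j$ is odd and coprime to~$3$, so writing $n_j = 2k_j+1$ each factor meets the hypothesis $3 \nmid 2k_j+1$ of Theorem~\ref{th:fgm}.

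I would then induct on~$r$. Beginning with the given standard directed R$^*$-terrace for~$A$, apply Theorem~\ref{th:fgm} with the factor~$\Z_{n_1}$ to produce a directed R$^*$-terrace for $A \times \Z_{n_1}$; treat $A \times \Z_{n_1}$ (abelian, of odd order, and R$^*$-sequenceable) as the new base group and repeat with~$\Z_{n_2}$, and so on. After $r$ steps this reaches $A \times B$, and at every stage the hypotheses of Theorem~\ref{th:fgm} hold, so an R$^*$-terrace is produced.

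The part needing real care is to check that the construction in the proof of Theorem~\ref{th:fgm} keeps the terrace \emph{standard}, and more generally transfers the defining relation $a_i = a_{i-1}+a_{i+1}$ at each position~$i$ with $1 \le i \le m-3$. Here I would read off from that construction that the first $m-2$ entries of the new directed terrace are precisely $(a_1,0), (a_2,0), \ldots, (a_{m-2},0)$---the listed second coordinates open with $m-2$ zeros---while its last entry is $(a_{m-1},0)$. Hence for $2 \le i \le m-3$ the three consecutive entries in positions $i-1, i, i+1$ are $(a_{i-1},0), (a_i,0), (a_{i+1},0)$, so the relation holds in the larger group exactly when it holds in~$A$; and for $i=1$ the wrap-around replaces position~$0$ by the last entry $(a_{m-1},0)$, so the standard relation $a_1 = a_{m-1}+a_2$ transfers as well. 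The range stops at $m-3$ because the entry in position $m-1$ carries a nonzero second coordinate (the first term of the middle block is $2k \ne 0$ in $\Z_{2k+1}$), so the relation at $i=m-2$ need not survive; this is the source of the stated bound.

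Since a standard terrace carries its relation at position~$1$, and the hypothesis $1 \le i \le m-3$ forces $m \ge 4$ so that position~$1$ is admissible for~$A$ (and hence for every larger group met later, whose order only increases), standardness persists through all~$r$ applications and $A \times B$ inherits a standard directed R$^*$-terrace. Should one instead wish to place the relation at another admissible position, the re-indexing remark preceding Theorem~\ref{th:fgm} converts any R$^*$-terrace to a standard one. I expect the only nonroutine work to be the position-transfer verification above---in particular handling the $i=1$ wrap-around and the $i=m-2$ boundary correctly---while the decomposition of~$B$ and the induction on~$r$ are entirely routine.
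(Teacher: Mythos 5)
Your proposal is correct and takes essentially the same route as the paper, whose proof consists precisely of writing $B$ as a direct product of cyclic groups of odd order (each coprime to $3$) and repeatedly applying the construction in the proof of Theorem~\ref{th:fgm}. The paper leaves the preservation of standardness implicit, and your explicit check---that the first $m-2$ entries and the final entry of the product terrace have second coordinate $0$, so the relation at any position $i \leq m-3$ (in particular $i=1$) transfers, while it can fail at $i=m-2$ because position $m-1$ carries the nonzero second coordinate $2k$---is exactly the verification needed and is carried out correctly.
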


\begin{proof}
We may write~$B$ as the direct product of cyclic groups of odd order.  The result follows from repeated applications of the construction in the proof of Theorem~\ref{th:fgm}.
\end{proof}

Call the process of Theorem~\ref{th:fgm} and Corollary~\ref{cor:fgm} the {\em FGM Construction}.

To construct \#-harmonious sequences we need the closely-related notion of a harmonious sequence.  Let~${\bf c} = (c_1, c_2, \ldots, c_{m})$ be an arrangement of the elements of an abelian group~$A$ and define~${\bf d} = (d_1, d_2, \ldots, d_m)$ by $d_ i = c_i + c_{i+1}$ where the indices are calculated modulo~$m$ (so $d_{m} = c_m + c_1$).  If~${\bf d}$ is also an arrangement of the elements of~$A$ then ${\bf c}$ is a {\em harmonious sequence} and~$A$ is {\em harmonious}.

\begin{lem}\label{lem:hashharm}{\rm \cite{BGHJ91}}
Every odd-order abelian group except~$\Z_3$ is \#-harmonious.
\end{lem}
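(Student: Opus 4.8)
The plan is to build a \#-harmonious sequence by surgery on a harmonious one, exactly as the placement of the ``harmonious'' definition just before the lemma suggests. First note that for $A$ of odd order the sum of all group elements is $0$, since each non-identity element cancels with its inverse; this is precisely the parity condition needed for harmoniousness, so there is no obstruction. A harmonious sequence itself always exists: for $\Z_m$ with $m$ odd the listing $(0,1,2,\ldots,m-1)$ is harmonious, because its consecutive sums are $1,3,5,\ldots \pmod m$, a permutation of $\Z_m$; for a general odd-order abelian $A$ one assembles such sequences on the cyclic factors by the standard product construction for harmonious groups \cite{BGHJ91}. I would take this harmonious existence as the starting input.

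The conversion then runs as follows. Rotate the harmonious sequence so that the identity sits last, writing it $(b_1,b_2,\ldots,b_{m-1},0)$; now $b_1,\ldots,b_{m-1}$ are exactly the non-identity elements, and the consecutive sums $b_1+b_2,\ldots,b_{m-2}+b_{m-1}$ together with $b_{m-1}$ and $b_1$ (the two sums touching the final $0$) run through all of $A$. As $0$ occurs among these sums and $b_1,b_{m-1}\neq 0$, there is a unique $r$ with $b_r+b_{r+1}=0$, that is $b_{r+1}=-b_r$. Deleting the identity converts the cycle into the path $b_1-b_2-\cdots-b_{m-1}$ on the non-identity elements, whose sums are $A\setminus\{b_1,b_{m-1}\}$ and still include $0$. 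Cutting this path at the zero-sum edge gives $P_1=(b_1,\ldots,b_r)$ and $P_2=(b_{r+1},\ldots,b_{m-1})$; reassembling into a single cycle by reversing $P_2$ produces the arrangement $(b_1,\ldots,b_r,b_{m-1},b_{m-2},\ldots,b_{r+1})$ of the non-identity elements. Its cyclic sums are all the old path-sums except the discarded $0$, together with two bridge sums $b_r+b_{m-1}$ and $b_{r+1}+b_1=b_1-b_r$. Since the surviving path-sums equal $A\setminus\{0,b_1,b_{m-1}\}$, the result is \#-harmonious exactly when the bridges supply the two missing elements, that is when $\{\,b_r+b_{m-1},\,b_1-b_r\,\}=\{b_1,b_{m-1}\}$; the only consistent solution (the alternative forces $b_r=0$) is $b_r=b_1-b_{m-1}$. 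For $\Z_5$ the sequence $(0,1,2,3,4)$ already satisfies this, and the surgery returns the \#-harmonious $(1,2,4,3)$.

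The hard part will be guaranteeing a harmonious sequence meeting the \emph{compatibility condition} $b_r=b_1-b_{m-1}$, which links the neighbours $b_1,b_{m-1}$ of the identity to the zero-sum pair $\{b_r,-b_r\}$. This is a genuine constraint: it is invariant under rotation of the cycle, under reflection, and under scaling by a unit $x\mapsto\mu x$, so one cannot simply massage the canonical sequence $(0,1,\ldots,m-1)$, which in fact meets the condition only for $m=5$. The plan is instead to construct suitable harmonious sequences directly, by a recursion that tracks the positions of the identity's neighbours and of the zero-sum pair and arranges them to satisfy the condition at each stage; equivalently, and this is the route actually taken in \cite{BGHJ91}, one builds the \#-harmonious sequences themselves by an analogous recursion. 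The unique odd exception is $\Z_3$: there are then only two non-identity elements, which are forced to be an inverse pair, so the deleted-identity path is the single zero-sum edge, the reconnection degenerates, and indeed $m-1=2$ makes the two cyclic \#-sums coincide—hence no \#-harmonious sequence exists, exactly as the statement asserts.
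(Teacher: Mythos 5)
Your reduction is sound as far as it goes: the rotate--delete--cut--reverse surgery is computed correctly, the bridge analysis rightly pins the success condition to $b_r=b_1-b_{m-1}$ (with the alternative matching forcing $b_r=0$), and the $\Z_3$ degeneracy is right. But the proof stops exactly where the content begins. You never establish, for a general odd-order abelian group, the existence of a harmonious sequence satisfying your compatibility condition; indeed you verify yourself that the canonical sequence on $\Z_m$ meets it only for $m=5$, and that the condition is invariant under rotation, reflection and unit scaling, so it cannot be massaged into place --- and then you substitute a ``plan'' (an unspecified recursion tracking the identity's neighbours and the zero-sum pair) for an argument. Worse, the fallback ``this is the route actually taken in \cite{BGHJ91}'' is circular: the lemma is quoted from~\cite{BGHJ91}, so pointing to that paper's recursion is pointing to the statement under proof, not invoking an available input. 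As it stands you have a correct reformulation of the problem, not a proof.

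For comparison, the paper does not pass from harmonious to \#-harmonious by surgery at all: it constructs the \#-harmonious sequences directly, by induction on a decomposition $A=B\times\Z_{r_1}\times\cdots\times\Z_{r_k}$ with $B$ either $\Z_3^2$ or cyclic of order greater than~$3$, and it strengthens the inductive hypothesis to the groups being \emph{harmoniously matched} --- carrying a harmonious and a \#-harmonious sequence that agree in their first and last elements --- with explicit base sequences for $\Z_3^2$, $\Z_{4\ell+1}$ and $\Z_{4\ell+3}$, and an explicit splice of the matched pair for $C$ against a harmonious sequence for $D$ starting at~$0$. That strengthened hypothesis is precisely the positional bookkeeping your sketch concedes is needed, so your instinct about what must be tracked is right; but carrying it out is the entire proof, and it is absent. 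Note also that the rest of the paper relies on structural features of this explicit construction (e.g.\ that $-2$ and $1$ occur adjacently in the BGHJ \#-harmonious sequence, used in Theorems~\ref{th:cyclic}, \ref{th:non3} and~\ref{th:3}), so even a completed bare-existence argument via surgery would not supply what the subsequent results consume.
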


\begin{proof}[Proof Construction]
Let~$A$ be an abelian group of odd order other than~$\Z_3$ and write $$A = B \times \Z_{r_1} \times \Z_{r_2} \times \cdots \times \Z_{r_k}$$ where~$B$ is either $\Z_3^2$ or cyclic of order greater than~3.  The construction is by induction.  

We in fact show that these groups are {\em harmoniously matched}, meaning that they have  a harmonious sequence and a \#-harmonious sequence that start with the same element as each other and end with the same element as each other.  

To see that $\Z_3^2$ is harmoniously matched, consider the \#-harmonious sequence
$$ (1,1), (2,0), (2,1), (0,2), (2,2), (1,0), (1,2), (0,1)   $$
and the harmonious sequence
$$ (1,1) ,(2,1), (0,2), (1,2) , (2,2), (0,0), (1,0), (2,0), (0,1). $$
For~$r = 4\ell +1$ the \#-harmonious sequence
$$ 2\ell,  2\ell - 2, \ldots, 2, 4\ell, 4\ell-2, \ldots, 2\ell +2, 2\ell+1, 2\ell +3, \ldots, 4\ell-1, 1, 3, \ldots, 2\ell-1$$
is matched with the harmonious sequence
$$2\ell, 2\ell+1, \ldots, 4\ell, 0, 1, 2, \ldots, 2\ell -1.$$
For~$r = 4\ell +3$ with $\ell > 0$ the \#-harmonious sequence
$$2\ell +1, 2\ell -1, \ldots, 1, 4\ell +1, 4\ell -1, \ldots, 2\ell+3, 2\ell+2, 2\ell+4, \ldots, 4\ell+2, 2,3, \ldots, 2\ell$$
is matched with the harmonious sequence
$$2\ell+1, 2\ell+2, \ldots, 4\ell_2, 0, 1, 2, \ldots, 2\ell.$$
Note also that $(0,1,2)$ is a harmonious sequence for~$\Z_3$.

Now, let~$C$ and~$D$ be abelian groups of odd order with~$(c_1, c_2, \ldots, c_{m-1})$ and~$(c'_1, c'_2, \ldots, c'_m)$ a matched \#-harmonious and harmonious sequence for~$C$ and $(d_1, d_2, \ldots, d_n)$ a harmonious sequence for~$D$, indexed so that~$d_1= 0$.  Then
$$ (c_1, d_1),  (c_2, d_1), \ldots, (c_{m-1}, d_1); (c'_1, d_2), (c'_2, d_2), \ldots, (c'_m, d_2); 
\ldots ; (c'_1,d_n), \ldots, (c'_m, d_n)$$
is a \#-harmonious sequence for $C \times D$ and
$$ (c'_1, d_1),  (c'_2, d_1), \ldots, (c'_{m}, d_1); (c'_1, d_2), (c'_2, d_2), \ldots, (c'_m, d_2); 
\ldots ; (c'_1,d_n), \ldots, (c'_m, d_n)$$
is a harmonious sequence.  These may be re-indexed to be matched (for example, by taking endpoints $(c'_1, d_2)$ and $(c'_2, d_2)$).
\end{proof}

Call a \#-harmonious sequence constructed via the proof of Lemma~\ref{lem:hashharm} a {\em BGJH \#-harmonious sequence}.

We can now move to the main results.  First, we consider the case when~$A$ is cyclic.  

\begin{thm}\label{th:cyclic}
Let~$q$ be an odd prime.  Let~$A = \Z_m$, where~$m$ is odd and either $q^2 | m$ or~$p | m$ for some prime~$p \equiv 1 \pmod{q}$ and let~$\alpha$ be an automorphism of~$\Z_m$ of order~$q$.
Then~$G = \Z_{q} \ltimes_{\alpha} \Z_m$ is sequenceable. 
\end{thm}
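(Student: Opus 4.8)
The plan is to apply Theorem~\ref{th:constr} directly, so the entire task reduces to producing, for each admissible~$A = \Z_m$, a directed R-terrace~$\a$ and a \#-harmonious sequence~$\c$ that satisfy the two linkage conditions $a_1 = c_1 + c_{m-1} - \alpha^{q-1}(c_1)$ and $a_{m-1} = a_1 - \alpha^{\la-1}(c_{m-1})$. Since~$m$ is odd, $\Z_m$ has no involution, so by Theorem~\ref{th:hash_rseq} both objects exist; the content is in arranging that \emph{specific} instances of them mesh with the automorphism. First I would fix~$\la$ to be the primitive root guaranteed in Section~\ref{sec:constr}, and recall that an order-$q$ automorphism of~$\Z_m$ is multiplication by some unit~$r$ with~$r^q \equiv 1 \pmod m$ and~$r \not\equiv 1$; thus $\alpha^k(c) = r^k c$, which turns the two linkage equations into purely arithmetic constraints in~$\Z_m$.

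The natural source for~$\c$ is the explicit BGJH \#-harmonious sequence from Lemma~\ref{lem:hashharm}, and the natural source for~$\a$ is the graceful-permutation construction of Lemma~\ref{lem:g2rseq} together with the freedom in Lemma~\ref{lem:g1} to prescribe the first entry~$g_1 = x$ of the graceful permutation. The key observation is that these two constructions each leave a free parameter: the \#-harmonious sequence can be cyclically rotated (a rotation of~$\c$ preserves the \#-harmonious property since the defining sums are taken cyclically), which lets me choose the endpoints~$c_1$ and~$c_{m-1}$, while the graceful-permutation machinery lets me choose~$a_1$. So the strategy is to first select~$\c$ (fixing~$c_1, c_{m-1}$), then compute the required value $a_1 = c_1 + c_{m-1} - r^{q-1}c_1$ and the required value $a_{m-1} = a_1 - r^{\la-1}c_{m-1}$, and finally invoke Lemma~\ref{lem:g1} to build a directed R-terrace realizing these two prescribed endpoints.

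The main obstacle is exactly this last meshing step: the R-terrace coming from Lemma~\ref{lem:g2rseq} has a rigid global shape (a graceful permutation of $\{1,\dots,k\}$ followed by its reversal shifted by~$k$, for $m = 2k+1$), so while Lemma~\ref{lem:g1} gives complete control over~$a_1$, the value of~$a_{m-1}$ is then essentially \emph{determined} rather than free. I would therefore expect to need a more careful analysis of the endpoints of the specific terrace produced, and possibly to exploit an additional symmetry---for instance the reversal of a directed R-terrace is again a directed R-terrace, interchanging the roles of~$a_1$ and~$a_{m-1}$, and negation $\a \mapsto -\a$ gives yet another---to gain a second independent degree of freedom. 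The bulk of the proof will consist of checking that, after choosing~$\c$ appropriately (using the case split $m \equiv 1$ vs.\ $m \equiv 3 \pmod 4$ from the BGJH construction, and the divisibility hypothesis on~$m$ that guarantees~$\alpha$ exists), the two target endpoint values are simultaneously achievable; I anticipate that the divisibility condition ($q^2 \mid m$ or $p \mid m$ with $p \equiv 1 \pmod q$) is used precisely to ensure~$r-1$ and related quantities behave well enough in~$\Z_m$ for the endpoint equations to have solutions.

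Once the endpoints are matched, the hypotheses of Theorem~\ref{th:constr} are satisfied and sequenceability of~$G = \Z_q \ltimes_\alpha \Z_m$ follows immediately, with no further computation required.
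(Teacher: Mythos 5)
Your skeleton coincides with the paper's: both reduce the theorem to Theorem~\ref{th:constr}, take the \#-harmonious sequence~$\c$ from the BGHJ construction of Lemma~\ref{lem:hashharm}, and build the directed R-terrace~$\a$ from a graceful permutation via Lemmas~\ref{lem:g2rseq} and~\ref{lem:g1}. But your proposal stops exactly at the step you yourself flag as ``the bulk of the proof,'' and the degrees of freedom you plan to use there cannot close it. Cyclically rotating~$\c$ only lets $(c_{m-1},c_1)$ be one of the adjacent pairs already present in the fixed BGHJ sequence, so it gives no real control over the \emph{values} of the endpoints; and on the terrace side, writing $m = 2k+1$, every terrace produced by Lemma~\ref{lem:g2rseq} has wrap difference $a_1 - a_{m-1} = g_1 - (g_1+k) \equiv k+1 \pmod{m}$ \emph{independently} of the graceful permutation, while reversal or negation merely flips this to~$k$. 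Moreover, since $a_1 - a_{m-1}$ is one of the cyclic differences of an R-sequencing, each nonzero value occurs exactly once, so cyclic rotation of~$\a$ cannot produce a new~$a_1$ with the same wrap difference: the ``second independent degree of freedom'' you hope to extract from reversal and negation simply is not there. As written, your plan has no mechanism for making $a_{m-1} = a_1 - \alpha^{\la-1}(c_{m-1})$ hold.

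The missing idea, which is the heart of the paper's proof, is a third symmetry: applying any automorphism of~$\Z_m$ to a \#-harmonious sequence yields another one. The BGHJ sequence has~$1$ and~$-2$ adjacent; multiplying through by the unit $r^{1-\la}(k+1)$ (here $k+1 \equiv 2^{-1} \pmod m$, so it is coprime to~$m$) and indexing so that $c_{m-1} = r^{1-\la}(k+1)$ and $c_1 = -2r^{1-\la}(k+1)$ forces $\alpha^{\la-1}(c_{m-1}) = r^{\la-1}r^{1-\la}(k+1) = k+1$, i.e.\ precisely the invariant wrap difference of the Lemma~\ref{lem:g2rseq} terraces. The rigidity you identified as the obstacle thus becomes an asset: the second linkage condition holds automatically, and only $a_1 = v := c_1 + c_{m-1} - \alpha^{q-1}(c_1)$ remains, which Lemma~\ref{lem:g1} settles by choosing $g_1$ equal to~$v$ or $v-k$ (whichever integer lies in $\{1,\dots,k\}$) and taking the terrace or its reverse so that $a_1 = v$. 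One must still rule out $v = 0$, since R-terrace entries are nonzero: with this scaling $v = s(2r^{q-1}-1)$ for a unit~$s$, and the paper's fallback of swapping the roles of~$1$ and~$-2$ replaces this with $s'(-1-r^{q-1})$; both vanish only if $3 \equiv 0 \pmod m$, which is impossible here. Finally, your guess about the arithmetic hypotheses is off: the conditions $q^2 \mid m$ or $p \mid m$ with $p \equiv 1 \pmod q$ are used only to guarantee that a unit~$r$ of multiplicative order~$q$ modulo~$m$ exists at all; once it does, the endpoint equations are solved for \emph{every} such~$r$ by the scaling trick, not by any special behaviour of $r-1$ in~$\Z_m$.
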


\begin{proof}
We aim to satisfy the conditions of Theorem~\ref{th:constr}.
The arithmetical conditions on~$m$ ensure that we are considering exactly those~$m$ for which there is a value~$r$ such that multiplication by~$r$ is an automorphism of order~$q$ of~$\Z_m$.    Let~$\alpha$ be multiplication by~$r$ and let~$\lambda$ be a primitive root of~$q$ such that~$\la / (\la-1)$ is also a primitive root.  

Let~$m = 2k+1$.
The BGHJ \#-harmonious sequence for~$\Z_{m}$ has 1 and $-2$ as adjacent elements.   As~$\gcd(k+1,m) =1$, multiplication by~$r^{1 - \la}(k+1)$ is an automorphism of~$\Z_m$.  Apply this automorphism to the BGHJ \#-harmonious sequence to get one with $r^{1 - \la}(k+1)$ and $-2r^{1 - \la}(k+1)$ adjacent.  Index this \#-harmonious sequence so that $c_1 = -2r^{1 - \la}(k+1)$ and $c_{m-1} = r^{1 - \la}(k+1)$.

Suppose that~$c_1 + r^{1 -\la}(k+1) - r^{q-1}c_1 \neq 0$.  Let~$g_1$ be $c_1 + r^{1 -\la}(k+1) - r^{q-1}c_1 $ or $c_1 + r^{1 -\la}(k+1) - r^{q-1}c_1 - k$, whichever, when considered as an integer, is in the range~$1 \leq g_1 \leq k$.  Let ${\bf g}$ be a graceful permutation with first element~$g_1$ (which exists by Lemma~\ref{lem:g1}), and let~${\bf a} = (a_1, a_2, \ldots, a_{m-1})$ be either the directed rotational terrace constructed from~${\bf g}$ via Lemma~\ref{lem:g2rseq} or its reverse, so that $a_1 = c_1 + r^{1 -\la}(k+1) - r^{q-1}c_1$.

Now,  $a_1 = c_1 + c_{m-1} - \alpha^{q-1}(c_1)$ and, as $a_1 - a_{m-1} = k+1$, we have $a_{m-1} = a_1 - \alpha^{\la-1}(c_{m-1})$, and the conditions of Theorem~\ref{th:constr} are satisfied.

If $c_1 + r^{1 -\la}(k+1) - r^{q-1}c_1 = 0$, then reversing the initial roles of~1 and~$-2$ and running the same process gives alternative values for the variables such that $c_1 + r^{1 -\la}(k+1) - r^{q-1}c_1 \neq 0$ and the argument goes through as before.
\end{proof}

Next we move to non-cyclic $A$.     We shall need certain facts about the automorphism groups of abelian groups.  A comprehensive description of their structure can be found in~\cite{HR07}.  

Given two elements~$g$ and~$h$ of an abelian group, say that they are {\em independent} if $\langle g \rangle \cap \langle h \rangle = \{ 0 \}$.   Let $(g_1, h_1)$ and $(g_2, h_2)$ be pairs of independent elements in an abelian group~$A$.  If all four elements have the same prime order, then there is an automorphism of~$A$ that maps $g_1$ to $g_2$ and $h_1$ to $h_2$.

We shall be considering groups of the form $A = \Z_p^k \times B$ where $p$ is prime, $k \geq 2$ and $p \nmid |B|$.  If~$p^k \equiv 1 \pmod{q}$, for some prime~$q$ then~$A$ has an automorphism of order~$q$ with $\alpha \rest \Z_p^k$ also of order~$q$.  Further, we restrict our attention to~$\alpha$ of this form such that $\alpha \rest \Z_p^k$ (which is a subgroup of $\gl(k,p)$) is not diagonalisable, of which there is always at least one.

Suppose~$\alpha$ and $\beta$ are two automorphisms of~$A$. If $\alpha$ and $\beta$ are conjugate then $\Z_q \ltimes_{\alpha} A \cong \Z_q \ltimes_{\beta} A$.  We may therefore assume that~$\alpha \rest \Z_p^k$, which we think of as a matrix in $\gl(k,p)$ multiplying elements of~$A$ (considered as column vectors) from the left, is in {\em rational canonical form}. That is, it is a block-diagonal matrix where each block has 1s on the subdiagonal and 0s everywhere else except for the last column, see~\cite[Chapter~12]{DF04}.  In fact, as~$\alpha$ and~$\beta$ are conjugate if and only if $\alpha^c$ and~$\beta^c$  are conjugate for positive~$c$, we shall usually assume that a particular power of~$\alpha \rest \Z_p^k$ is in rational canonical form.

The first non-cyclic case we consider is when~$3 \nmid |A|$:

\begin{thm}\label{th:non3}
Let~$p$ and~$q$ be odd primes with~$p \neq 3$ and $p^k \equiv 1 \pmod{q}$.   Let~$B$ be an abelian group of odd order with~$3 \nmid |B|$.  Let~$\alpha$ be an automorphism of $\Z_p^k \times B$ of order~$q$ such that $\alpha \rest \Z_p^k$ is of order~$q$ and not diagonalisable.
Then $G = \Z_{q} \ltimes_{\alpha} (\Z_p^k \times B)$ is sequenceable. 
\end{thm}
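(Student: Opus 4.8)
The plan is to apply Theorem~\ref{th:constr}. Writing $m=p^k|B|$ (odd, and $>3$ since $p\neq 3$), I must build a directed rotational terrace $\mathbf a=(a_1,\dots,a_{m-1})$ and a \#-harmonious sequence $\mathbf c=(c_1,\dots,c_{m-1})$ for $A=\Z_p^k\times B$ meeting the two endpoint conditions $a_1=c_1+c_{m-1}-\alpha^{q-1}(c_1)$ and $a_{m-1}=a_1-\alpha^{\la-1}(c_{m-1})$. Each ingredient exists on its own---a \#-harmonious sequence by Lemma~\ref{lem:hashharm}, a directed rotational terrace by the FGM Construction---so the entire content is to make their four relevant endpoints interlock. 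The idea is to take the endpoints to be independent elements of order~$p$ in $\Z_p^k$ and then transport generic constructions onto them by automorphisms, using the recorded fact that an automorphism of $A$ can carry any pair of independent elements of common prime order to any other such pair.

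First I would build the terrace. Beginning from a standard directed R$^*$-terrace of a $\Z_p^2\leq\Z_p^k$ whose first and last entries are linearly independent (supplied by a direct construction; this is a preliminary ingredient), Corollary~\ref{cor:fgm} applies because $3\nmid p^{k-2}|B|$ and produces a standard directed R$^*$-terrace for all of $A$. Reading off the FGM Construction, its first and last entries are precisely the endpoints of the base terrace, so $a_1,a_{m-1}$ are independent order-$p$ elements of $\Z_p^k$. Since $\Z_p^k$ is the characteristic Sylow $p$-subgroup of $A$ and $\gl(k,p)\leq\mathrm{Aut}(A)$ acts transitively on ordered independent pairs of order-$p$ elements, applying an automorphism to the whole terrace lets me realize $(a_1,a_{m-1})$ as any prescribed independent pair of order-$p$ elements of $\Z_p^k$. (Only the directed-rotational-terrace property is needed downstream, and it survives any automorphism.)

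Next I would collapse the two conditions. Using $\alpha^{q-1}=\alpha^{-1}$ and substituting the second into the first turns them into the single linear relation
\[
\begin{pmatrix}a_1\\ a_{m-1}\end{pmatrix}
=\begin{pmatrix}1-\alpha^{-1} & 1\\ 1-\alpha^{-1} & 1-\alpha^{\la-1}\end{pmatrix}
\begin{pmatrix}c_1\\ c_{m-1}\end{pmatrix}
\]
on $\Z_p^k\oplus\Z_p^k$, whose blocks are polynomials in $\alpha\rest\Z_p^k$ and hence commute. So I would choose $(c_1,c_{m-1})$ first and \emph{define} $(a_1,a_{m-1})$ by this map, automatically satisfying both conditions. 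Granting that $(c_1,c_{m-1})$ can be taken to be an independent order-$p$ pair whose image is again an independent order-$p$ pair, the transitivity fact installs the endpoints: one checks that a BGHJ \#-harmonious sequence for $A$ contains two cyclically adjacent independent order-$p$ elements (for example from a block lying inside $\Z_p^k$ produced by a $\Z_p$-extension in the construction of Lemma~\ref{lem:hashharm}); reindex so this pair occupies positions $m-1$ and $1$, and apply the automorphism carrying it onto $(c_1,c_{m-1})$, which preserves \#-harmoniousness. With $(a_1,a_{m-1})$ realized as in the previous paragraph, Theorem~\ref{th:constr} finishes the proof.

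The crux---and the only place the hypotheses on $\alpha\rest\Z_p^k$ enter---is producing the independent pair with independent image. The determinant of the block matrix equals $\det(1-\alpha^{-1})\cdot\det(-\alpha^{\la-1})$, so when $\alpha$ has no fixed vector the map is invertible, and since $p\geq 5$ and $k\geq 2$ a short counting argument over the independent pairs shows some independent input has independent image. When $\alpha$ has a nonzero fixed space $F$ (possible, since $\dim F$ must be a multiple of $d:=\mathrm{ord}_q(p)$) the map is singular, and I would instead take $c_1\in F$, so that $a_1=c_{m-1}$ and $a_{m-1}=(1-\alpha^{\la-1})(c_{m-1})$, reducing everything to the independence of $c_{m-1}$ and $\alpha^{\la-1}(c_{m-1})$. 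In both cases the decisive input is that non-diagonalisability of $\alpha\rest\Z_p^k$ forces a companion block of size $d\geq 2$, hence a cyclic vector $v$ with $v,\alpha(v),\dots,\alpha^{d-1}(v)$ independent; because $\la\not\equiv 1\pmod q$ this gives $\alpha^{\la-1}(v)\notin\langle v\rangle$, exactly the nondegeneracy that a diagonal (scalar) action would destroy by trapping every image inside a single cyclic subgroup. I expect verifying these independencies explicitly---most cleanly after putting a convenient power of $\alpha\rest\Z_p^k$ into rational canonical form---to be the main technical labor.
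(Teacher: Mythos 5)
Your proposal shares the paper's overall architecture --- reduce to Theorem~\ref{th:constr}; build the directed R-terrace via the FGM Construction so that $a_1,a_{m-1}$ form an independent pair of order-$p$ elements; take a BGHJ \#-harmonious sequence; and exploit transitivity of the automorphism group on independent pairs of equal prime order --- but it genuinely diverges in how the endpoint conditions are met on the \#-harmonious side. The paper never requires $c_1,c_{m-1}$ to be independent: it keeps the BGHJ adjacent pair $\{(-2,0,\ldots,0),(1,0,\ldots,0)\}$ (a \emph{dependent} pair, both elements lying in one cyclic subgroup), normalises $\alpha^{\la-1}\rest\Z_p^k$ into rational canonical form with $(1,1)$-entry $0$ so that $\alpha^{\la-1}$ sends $(1,0,\ldots,0)$ to $(0,1,0,\ldots,0)$, and shows that of the two ways of assigning this pair to $(c_1,c_{m-1})$ at least one makes the derived pair $c_1+c_{m-1}-\alpha^{q-1}(c_1)$, $c_1+c_{m-1}-\alpha^{q-1}(c_1)-\alpha^{\la-1}(c_{m-1})$ independent, using $\alpha^{q-1}(-2,0,\ldots,0)=-2\,\alpha^{q-1}(1,0,\ldots,0)$ together with $-2\not\equiv 1\pmod p$; this orientation trick is exactly where the hypothesis $p\neq 3$ bites in the paper. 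You instead invert the two conditions as a linear map with commuting blocks, prescribe $(c_1,c_{m-1})$ as an independent order-$p$ pair, and define $(a_1,a_{m-1})$ as its image; your determinant identity $\det(1-\alpha^{-1})\det(-\alpha^{\la-1})$ and the split into the fixed-space and invertible cases are correct, and the route does close: taking $c_{m-1}=v$ a cyclic vector of a companion block of size $d\geq 2$ (so $v$ and $\alpha^{\la-1}(v)$ are independent, as you argue), in the invertible case the set of bad $c_1$ --- those with $(1-\alpha^{-1})(c_1)+v\in\langle\alpha^{\la-1}(v)\rangle$ or with $c_1\in\langle v\rangle$ --- has size at most $2p<p^k$.

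Three ingredients, however, are asserted rather than established. First, your claim that a BGHJ \#-harmonious sequence for $A$ contains cyclically adjacent \emph{independent} order-$p$ entries needs an argument, and it matters: the natural distinguished pair $(-2,0,\ldots,0),(1,0,\ldots,0)$ is dependent, and no automorphism can carry a dependent pair to an independent one. The claim is true if you order the cyclic factors so the construction starts with $\Z_p$ extended by $\Z_p$: within a block whose second coordinate $d_j\neq 0$, adjacent entries $(c'_i,d_j),(c'_{i+1},d_j)$ of $\Z_p\times\Z_p$ are independent of order $p$, and such pairs persist through all later extensions because each extension reproduces the previous \#-harmonious sequence as its first block, paired with $0$. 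Second, you posit a \emph{standard} directed R$^*$-terrace of $\Z_p^2$ with independent endpoints as a ``preliminary ingredient''; nothing cited supplies this, and standardness pins the indexing, so it is not free. The paper's device repairs this for you: any directed R$^*$-terrace of $\Z_p^k$ unavoidably contains adjacent independent pairs, these survive the FGM Construction (in its first block), and since only the directed R-terrace property is needed downstream one may cyclically re-index at the end to place such a pair at positions $m-1$ and $1$. Third, a small slip: your parenthetical ``$\dim F$ must be a multiple of $d$'' is backwards --- it is the \emph{codimension} of the fixed space that is a multiple of $d$, since every nontrivial irreducible factor of $x^q-1$ over $\F_p$ has degree $d$. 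With the first two points repaired as above your variant goes through; its cost relative to the paper is the extra demand of independent $c$-endpoints, and its modest benefit is that $p\neq 3$ is used only peripherally, whereas the paper's orientation argument needs it essentially.
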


\begin{proof}
The non-diagonalisable condition implies that $k \geq 2$.  As noted before the statement of the theorem, we may assume that some non-trivial power of~$\alpha \rest \Z_p^k$ is in rational canonical form with the additional condition that the $(1,1)$-entry is~0.  Let~$\la$ be a primitive root of~$q$ such that $\la/(\la - 1)$ is also a primitive root and assume that~$\alpha^{\la -1} \rest \Z_p^k$ is in this form. 

To apply Theorem~\ref{th:constr}, we require a directed R-terrace $(a_1, \ldots a_{m-1})$ and a \#-harmonious sequence $(c_1, \ldots, c_{m-1})$ such that $a_1 = c_1 + c_{m-1} - \alpha^{q-1}(c_1)$ and $a_{m-1} = a_1 - \alpha^{\la-1}(c_{m-1})$.  It is sufficient to find a directed R-terrace with $a_1$ and $a_{m-1}$ a pair of independent elements of prime order and a \#-harmonious sequence with $c_1 + c_{m-1} - \alpha^{q-1}(c_1)$ and $c_1 + c_{m-1} - \alpha^{q-1}(c_1) - \alpha^{\la-1}(c_{m-1})$ also a pair of independent elements of prime order.

The group~$\Z_p^k$ has a directed R$^*$-terrace~\cite{FGM78}.  Any arrangement of the elements of~$\Z_p^k \setminus \{ 0 \}$ unavoidably has many pairs of adjacent independent elements and hence any directed R$^*$-terrace does too.  Using this in the FGM construction gives a directed R$^*$-terrace with a pair of independent elements of order~$p$ which, by re-indexing, we may set to be~$a_1$ and~$a_{m-1}$.  

As noted in the proof of Theorem~\ref{th:cyclic}, the BGHJ \#-harmonious sequence for $\Z_p$ has $-2$ and $1$ as adjacent elements.  Further, using the BGHJ construction, we find that $\Z_p^k \times B$ has a \#-harmonious sequence with $(-2,0,\ldots,0)$ and $(1,0,\ldots,0)$ adjacent.    Our target is to show that the condition
$$\alpha^{\la-1}(c_{m-1}) \not\in \langle c_1 + c_{m-1} - \alpha^{q-1}(c_1) \rangle$$
holds, which implies the result we require.
We may re-index the \#-harmonious sequence so that $$\{ c_1, c_{m-1} \} = \{ (-2,0,\ldots,0),  (1,0,\ldots,0) \}$$ and there are two ways to do so.  

We have that $\alpha^{\la-1}((1,0, \ldots, 0)) = (0, 1, 0, \ldots, 0)$.  For the condition to fail in the case $c_{m-1} = (1,0, \ldots,0)$ we must have $\alpha^{q-1}((-2,0,\ldots,0)) = (1,x,0\ldots,0)$ for some~$x \in \Z_p$.  For the condition to fail in the case $c_{m-1} = (-2,0, \ldots,0)$ we must have $\alpha^{q-1}((1,0,\ldots,0)) = (1,y,0\ldots,0)$ for some~$y \in \Z_p$.  But as $\alpha^{q-1}((-2,0,\ldots,0)) = -2\alpha^{q-1}((1,0,\ldots,0))$, at least one of the two potential allocations of $c_1$ and $c_{m-1}$ does not violate the condition.
\end{proof}

Lastly we cover the cases where~$3 \mid |A|$ but~$27 \nmid |A|$.

\begin{thm}\label{th:3}
Let~$p$ and~$q$ be odd primes with~$p \neq 3$ and $p^2 \equiv 1 \pmod{q}$.   Let~$B$ be an abelian group of odd order with~$3 \nmid |B|$.  If~$\alpha$ is an automorphism of $\Z_p^2 \times \Z_3 \times B$ of order~$q$ such that $\alpha \rest \Z_p^2$ is of order~$q$ and not diagonalisable, then $G_1 = \Z_{q} \ltimes_{\alpha} (\Z_p^2 \times \Z_3 \times B)$ is sequenceable. 
 If~$\alpha$ is an automorphism of $\Z_p^2 \times \Z_9 \times B$ of order~$q$ such that $\alpha \rest \Z_p^2$ is of order~$q$ and not diagonalisable, then $G_2 = \Z_{q} \ltimes_{\alpha} (\Z_p^2 \times \Z_9 \times B)$ is sequenceable. 
\end{thm}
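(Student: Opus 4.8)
The plan is to verify the hypotheses of Theorem~\ref{th:constr} for each of $G_1$ and $G_2$, following the reduction already used in the proof of Theorem~\ref{th:non3}. Writing $A$ for the relevant abelian group ($\Z_p^2 \times \Z_3 \times B$ or $\Z_p^2 \times \Z_9 \times B$) and $m = |A|$, it suffices to produce (i) a directed R-terrace $(a_1, \ldots, a_{m-1})$ for $A$ in which $a_1$ and $a_{m-1}$ are independent elements of prime order, together with (ii) a \#-harmonious sequence $(c_1, \ldots, c_{m-1})$ for $A$ in which $c_1 + c_{m-1} - \alpha^{q-1}(c_1)$ and $c_1 + c_{m-1} - \alpha^{q-1}(c_1) - \alpha^{\la-1}(c_{m-1})$ are also independent of prime order. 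Since $p \neq 3$, the natural source of independent pairs of equal prime order is the $\Z_p^2$ factor (the $\Z_3$, $\Z_9$ and $B$ parts each contribute at most one cyclic subgroup of any given order); so throughout the relevant prime is $p$ and the relevant pairs are order-$p$ elements of $\Z_p^2$.

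Part (ii) requires no new idea and is handled exactly as in Theorem~\ref{th:non3}. As $A \neq \Z_3$, Lemma~\ref{lem:hashharm} supplies a BGHJ \#-harmonious sequence with $(1,0,\ldots,0)$ and $(-2,0,\ldots,0)$ adjacent, the nonzero coordinate lying in $\Z_p^2$. The hypothesis that $\alpha \rest \Z_p^2$ has order $q$ means $\Z_p^2$ is $\alpha$-invariant, and with $p^2 \equiv 1 \pmod q$ the non-diagonalisable order-$q$ restriction acts irreducibly; assuming $\alpha^{\la-1} \rest \Z_p^2$ is in rational canonical form with $(1,1)$-entry $0$, the whole independence verification localises to $\Z_p^2$ and becomes the $k=2$ instance of the argument in Theorem~\ref{th:non3}. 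In particular, at least one of the two allocations of $\{c_1, c_{m-1}\}$ gives $\alpha^{\la-1}(c_{m-1}) \notin \langle c_1 + c_{m-1} - \alpha^{q-1}(c_1) \rangle$.

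The real work is (i), and this is exactly where the hypotheses here diverge from those of Theorem~\ref{th:non3}: now $3 \mid m$. The FGM construction (Theorem~\ref{th:fgm} and Corollary~\ref{cor:fgm}) can only adjoin direct factors of order coprime to $3$, so the $\Z_3$ (respectively $\Z_9$) factor cannot be introduced by FGM and must instead be carried by a base group whose standard directed R$^*$-terrace is built directly. Accordingly I would first construct standard directed R$^*$-terraces for the base groups $\Z_p^2 \times \Z_3$ and $\Z_p^2 \times \Z_9$, arranged so that an adjacent pair of independent order-$p$ elements of $\Z_p^2$ (with all other coordinates zero) occurs among the first $m_0 - 2$ positions, where $m_0$ is the base order. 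As $3 \nmid |B|$, Corollary~\ref{cor:fgm} then extends the base to a standard directed R$^*$-terrace for $A$; because each FGM step reproduces the base terrace in its first $m_0 - 2$ positions (appending a zero in the new coordinate), the chosen independent pair survives every extension and stays clean. Finally, using that any cyclic rotation of a directed R-terrace is again a directed R-terrace, I would rotate the resulting terrace to bring this pair into positions $m-1$ and $1$, yielding the terrace required in (i).

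The main obstacle is therefore the direct construction of the two base R$^*$-terraces for $\Z_p^2 \times \Z_3$ and $\Z_p^2 \times \Z_9$ — precisely the step the factor of $3$ denies to the FGM machinery — while simultaneously exposing an independent order-$p$ pair in an early, FGM-stable position. I expect these to be produced by explicit constructions (ordering the elements of $\Z_p^2$ so that independent pairs fall adjacent, and threading the $\Z_3$- or $\Z_9$-coordinate through to meet the rotational and R$^*$ conditions), with $\Z_3$ and $\Z_9$ treated as the two sub-cases of the statement. Verifying the rotational-sequencing property, the R$^*$ and standardness conditions, and the persistence of the independent pair for these two bases is the crux of the argument; everything else transfers from Theorems~\ref{th:constr} and~\ref{th:non3}.
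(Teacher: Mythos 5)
Your reduction to Theorem~\ref{th:constr}, your treatment of the \#-harmonious side (which is indeed identical to the $k=2$ case of Theorem~\ref{th:non3}), and your diagnosis that the factor of~$3$ must be carried by the base of the FGM induction all match the paper. But there is a genuine gap at exactly the point you flag as ``the main obstacle'': you never construct the base directed R$^*$-terraces, you only expect them to exist via unspecified ``explicit constructions.'' That construction \emph{is} the content of this theorem's proof, and moreover your framing of it --- building standard directed R$^*$-terraces for the non-cyclic groups $\Z_p^2 \times \Z_3$ and $\Z_p^2 \times \Z_9$ directly, with the independent order-$p$ pair inside the $\Z_p^2$ factor --- is not how the difficulty gets resolved, and nothing in the paper's toolkit would produce such terraces from scratch.

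The missing idea is a regrouping of the direct factors. Since $p \neq 3$, one has $\Z_p^2 \times \Z_3 \cong \Z_{3p} \times \Z_p$, so the $3$-part can be absorbed into a \emph{cyclic} base $\Z_{3p}$, and the leftover factor to be adjoined by Theorem~\ref{th:fgm} is $\Z_p$, which is coprime to~$3$ and hence admissible. For $\Z_{3p}$ the terrace is explicit: the Walecki graceful permutation fed through Lemma~\ref{lem:g2rseq} gives a directed R$^*$-terrace (the R$^*$ property at $a_{2p}$ is recorded in~\cite{FGM78}) with $a_4 = 3(p-1)/2$ and $a_5 = 3$ of order~$p$, and a single FGM step then yields adjacent independent order-$p$ elements $(3(p-1)/2,\,1)$ and $(3,\,-1)$ in $\Z_{3p} \times \Z_p$ --- note these have nonzero second coordinates, i.e.\ the pair is created \emph{by} the FGM step rather than protected through it as in your plan. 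For the $\Z_9$ case the same regrouping gives $\Z_{9p} \times \Z_p$, but here no graceful-permutation argument is available and the paper must import external machinery: the ``gadget'' of Alspach, Kreher and Pastine~\cite{AKP17}, applied with $G = \Z_p$ and $H = \Z_9$, supplies a standard directed R$^*$-terrace of $\Z_{9p}$ with $a_1$, $a_2$ and $a_{9p-1}$ of order~$p$, after which one FGM step produces the independent pair $(a_1, 1)$, $(a_1, -1)$. Without these two base constructions (or substitutes of equal strength) your argument does not close; in particular the $\Z_9$ subcase genuinely requires a result from outside the paper, which no amount of ad hoc ``threading the $\Z_9$-coordinate'' is likely to replace cheaply.
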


\begin{proof}
The structure of the proof is much the same as for Theorem~\ref{th:non3} in that we are looking for a directed R-terrace $(a_1, \ldots, a_{m-1})$ and a \#-harmonious sequence $(c_1, \ldots, c_{m-1})$ for $\Z_p^2 \times \Z_3 \times B$ (or $\Z_p^2 \times \Z_9 \times B$) such that $a_1$ and $a_{m-1}$ are a pair of independent elements of prime order~$p$ and $c_1 + c_{m-1} - \alpha^{q-1}(c_1)$ and $c_1 + c_{m-1} - \alpha^{q-1}(c_1) - \alpha^{\la-1}(c_{m-1})$ are also a pair of independent elements of order~$p$.

The argument for the existence of the latter of these is identical to that of Theorem~\ref{th:non3}.  The difficulty arises in the former case as the FGM construction for R$^*$-terraces does not directly apply.  In order to use the FGM construction we need to move the $\Z_3$ or $\Z_9$ factor into the base case and find directed R$^*$-terraces for $\Z_p^2 \times \Z_3$ and $\Z_p^2 \times \Z_9$ that have the independent pairs of adjacent elements we need.

First consider $\Z_p^2 \times \Z_3 \equiv \Z_{3p} \times \Z_p$.  The directed R-terrace $(a_1, \ldots a_{3p-1})$ for $\Z_{3p}$ constructed from the Walecki Construction using Lemma~\ref{lem:g2rseq} is in fact a directed R$^*$-terrace with~$a_{2p}$ the sum of its neighbors~\cite{FGM78}.  It also has $a_4 = 3(p-1)/2$ and $a_5 = 3$, which are elements of order~$p$.  Now, applying the construction of Theorem~\ref{th:fgm} we find a directed R$^*$-terrace of $ \Z_{3p} \times \Z_p$ with $(3(p-1)/2, 1)$ and $(3,-1 )$ adjacent, which are independent elements of order~$p$.  From here we can re-index and follow the method of Theorem~\ref{th:non3} to get the sequencing for~$G_1$.

Now consider $\Z_p^2 \times \Z_9 \equiv \Z_{9p} \times \Z_p$.  For this case we need part of the more complex constructions of~\cite{AKP17}.  However, as the aspects of that construction that we require are clearly stated within the paper, we do not recapitulate the full construction.  They use a device that they call ``the gadget"~\cite[Definition~3.5]{AKP17} which when used on a directed R$^*$-terrace of a group~$G$ and a group~$H$ that has orthomorphisms with an additional property, it produces a directed R$^*$-terrace for~$G \times H$.  Further, if $a_i = a_{i-1} + a_{i+1}$ gives the R$^*$ property in the directed R$^*$-terrace of~$G$ then we have $(a_i,0) = (a_{i-1},0) + (a_{i+1},0)$ doing the same for the resulting directed R$^*$-terrace of~$G \times H$~\cite[Lemma~3.7]{AKP17}.  It is allowable to take~$G = \Z_{p}$ and $H = \Z_{9}$ in this construction~\cite[Corollary~3.8]{AKP17}, which gives a standard directed R$^*$-terrace $(a_1, \ldots, a_{9p-1})$ for $\Z_{9p}$ with~$a_1$, $a_2$ and~$a_{9p-1}$ all of order~$p$.

We may now use the FGM Construction to give a directed R$^*$-terrace for~$\Z_{9p} \times \Z_p$.   This gives us adjacent elements of the form~$(a_1, 1)$ and $(a_1, -1)$, which are independent elements of order~$p$.  Re-index and follow the method of Theorem~\ref{th:non3} to get a sequencing for~$G_2$.
\end{proof}

Taken together with Lemma~\ref{lem:nonabnumbers}, Theorems~\ref{th:cyclic}, \ref{th:non3} and~\ref{th:3} prove Theorem~\ref{th:spectrum}.  While they also make some progress towards Keedwell's Conjecture, a full proof does not appear to be on the horizon.  Progress on the spectrum of not-necessarily-group-based complete Latin squares will require a totally different approach.

\section*{Acknowledgements}

The authors gratefully acknowledge the support of a Marlboro College Faculty Professional Development Grant.

\end{document}